\newcommand*{\mailto}[1]{\href{mailto:#1}{\nolinkurl{#1}}}
\newcommand{\R}{{\mathbb R}}
\newcommand{\N}{{\mathbb N}}
\newcommand{\C}{{\mathbb C}}
\newcommand{\bbN}{{\mathbb{N}}}
\newcommand{\bbR}{{\mathbb{R}}}
\newcommand{\bbZ}{{\mathbb{Z}}}
\newcommand{\cA}{{\mathcal A}}
\newcommand{\cB}{{\mathcal B}}
\newcommand{\cC}{{\mathcal C}}
\newcommand{\cD}{{\mathcal D}}
\newcommand{\cH}{{\mathcal H}}
\newcommand{\cQ}{{\mathcal Q}}
\newcommand{\cU}{{\mathcal U}}
\renewcommand{\a}{\alpha}
\renewcommand{\b}{\beta}
\newcommand{\g}{\gamma}
\newcommand{\z}{\zeta}
\DeclareMathOperator{\supp}{supp}
\DeclareMathOperator{\ran}{ran}
\renewcommand{\Re}{\text{\rm Re}}
\renewcommand{\Im}{\text{\rm Im}}
\renewcommand{\ln}{\text{\rm ln}}
\newcommand{\norm}[1]{\lVert#1\rVert}
\newcommand{\no}{\notag}
\newcommand{\lb}{\label}
\newcommand{\f}{\frac}
\newcommand{\ol}{\overline}
\newcommand{\wti}{\widetilde}
\newcommand{\dott}{\,\cdot\,}
\renewcommand{\dot}{\overset{\textbf{\Large.}}}
\newcommand{\bi}{\bibitem}
\newcommand{\al}{\alpha}
\newcommand{\be}{\beta}
\newcommand{\Lr}{{L^2((a,b);rdx)}}
\newcommand{\ACl}{{AC_{loc}((a,b))}}
\newcommand{\Ll}{{L^1_{loc}((a,b);dx)}}
\def\theequation{\@arabic\c@equation}
\numberwithin{equation}{section}
\newtheorem{theorem}{Theorem}[section]
\newtheorem{proposition}[theorem]{Proposition}
\newtheorem{lemma}[theorem]{Lemma}
\newtheorem{corollary}[theorem]{Corollary}
\newtheorem{definition}[theorem]{Definition}
\newtheorem{hypothesis}[theorem]{Hypothesis}
\newtheorem{example}[theorem]{Example}
\theoremstyle{remark}
\newenvironment{remark}[1][]{\refstepcounter{theorem}\par\medskip\noindent\textit{Remark~$\theexample. #1$} \rmfamily}{{\ }\hfill $\diamond$ \vspace{6pt}}%Define remark environment with diamond at the end
\begin{document}

\title[$K$-invariant operators]{Maximally dissipative and self-adjoint extensions of $K$-invariant operators}

\author[C.\ Fischbacher]{Christoph Fischbacher} 
\address[C. Fischbacher]{Department of Mathematics, 
Baylor University, Sid Richardson Bldg, 1410 S.\,4th Street, Waco, TX 76706, USA}
\email{\url{c\_fischbacher@baylor.edu}}

\author[B.\ Rosenzweig]{Bart Rosenzweig}
\address[B.\ Rosenzweig]{Department of Mathematics, The Ohio State University \\
100 Math Tower, 231 West 18th Avenue, Columbus, OH 43210, USA}
\email{\mailto{rosenzweig.26@osu.edu}}
%\email{rosenzweig.26@osu.edu}

\author[J.\ Stanfill]{Jonathan Stanfill}
\address[J.\ Stanfill]{Division of Geodetic Science, School of Earth Sciences, The Ohio State University \\
275 Mendenhall Laboratory, 125 South Oval Mall, Columbus, OH 43210, USA}
\email{\mailto{stanfill.13@osu.edu}}
%\email{stanfill.13@osu.edu}
%\urladdr{\url{https://u.osu.edu/stanfill-13/}}
%\urladdr{https://u.osu.edu/stanfill-13/}

%%%%%%%%%%%%%%%%%%%%%%%%%%%%%%%%%%%%%%%%% 
\date{\today}
\subjclass{Primary: 34B24, 47A05, 47B25; Secondary: 47A10, 47B44, 47B65.}
\keywords{Self-adjoint operators, Friedrichs and Krein--von Neumann extensions, dissipative operators, Sturm--Liouville operators}

%%%%%%%%%%%%%%%%%%%%%%%%%%%%%%%%%%%%%%%%
\begin{abstract}

We introduce the notion of $K$-invariant operators, $S$, (in a Hilbert space) with respect to a bounded and boundedly invertible operator $K$ defined via $K^*SK=S$. Conditions such that self-adjoint and maximally dissipative extensions of $K$-invariant symmetric operators are also $K$-invariant are investigated. In particular, the Friedrichs and Krein--von Neumann extensions of a nonnegative $K$-invariant symmetric operator are shown to always be $K$-invariant, while the Friedrichs extension of a $K$-invariant sectorial operator is as well.
We apply our results to the case of Sturm--Liouville operators where $K$ is given by $(Kf)(x)=A(x)f(\phi(x))$ under appropriate assumptions. Sufficient conditions on the coefficient functions for $K$-invariance to hold are shown to be related to Schr\"oder's equation and all $K$-invariant self-adjoint extensions are characterized. Explicit examples are discussed including a Bessel-type Schr\"odinger operator satisfying a nontrivial $K$-invariance on the half-line.
\end{abstract}
%%%%%%%%%%%%%%%%%%%%%%%%%%%%%%%%%%%%%%%%

\maketitle

%{\scriptsize{\tableofcontents}}
%\normalsize

%%%%%%%%%%%%%%%%%%%%%%%%
%%%%%%%%%%%%%%%%%%%%%%%% 
\section{Introduction}\lb{s1}
%%%%%%%%%%%%%%%%%%%%%%%%
%%%%%%%%%%%%%%%%%%%%%%%%

Let $S$ be a nonnegative symmetric operator in a Hilbert space $\cH$ and $K$ a bounded and boundedly invertible operator such that $S$ exhibits an invariance with respect to $K$ that is of the form $K^*SK=S$ -- a property which we will refer to as $K$-invariance. The main purpose of this paper is to describe all nonnegative self-adjoint extensions $\hat{S}$ of $S$ which are $K$-invariant, that is, which satisfy $K^*\hat{S}K=\hat{S}$. 
For the special case that $K$ is unitary, $S$ being $K$-invariant is equivalent to saying that $S$ and $K$ commute: $SK=KS$.  In \cite{{ILPP14}}, the authors studied the case when $S$ is a symmetric operator with equal defect indices and $K$ unitary such that $SK=KS$. They found that a self-adjoint extension $S_U$ of $S$, where $U$ is the unique unitary map between the defect spaces describing this extension $S_U$ via von Neumann theory, satisfies $S_UK=KS_U$ if and only $KU=UK$. In addition, they obtained first results on the $K$-invariance of quadratic forms and the associated induced self-adjoint operators. Instead of focusing on the von Neumann theory of self-adjoint extensions, our focus lies on properties that need to be required of the auxiliary operator $B:\cD(B)\subseteq \ker(S^*)\rightarrow\overline{\cD(B)}$ within the framework of Birman--Krein--Vishik--Grubb extension theory in order to ensure that the self-adjoint extension $S_B$ described by this auxiliary operator remains $K$-invariant. 

Moreover, we go beyond the unitary setup and allow $K$ to be any bounded and boundedly invertible operator, which need not be unitary anymore. This was motivated by a work of Makarov and Tsekanovskii \cite{MT07}, where so-called $\mu$-scale invariant operators $S$ were studied. The notion of $\mu$-scale invariance means that $S$ satisfies $\mathcal{U}^*S\mathcal{U}=\mu S$ for some unitary operator $\cU$ and a scalar $\mu>0$. One of their main results is that the Friedrichs and Krein--von Neumann extensions of a nonnegative $\mu$-scale invariant symmetric operator maintain this property (see also \cite[Thm. 5]{Be07}).
Now, if $\mu\neq 1$ and one defines the non-unitary operator $K_\mu:=\mu^{-1/2}\mathcal{U}$, then $S$ being $\mu$-scale invariant is equivalent to $S$ satisfying $K_\mu^*SK_\mu=S$, thus falling under the scheme of $K$-invariance studied currently, allowing us to readily extend the results of \cite{MT07}. We also mention further works in this direction \cite{BBNV15,BBUV18}, as well as \cite{HK09} considering so-called $p(t)$-homogeneous operators, of which $\mu$-scale invariant are a special case. Furthermore, despite the great interest in invariance of operators, we are unaware of any examples of invariance studied for general Sturm--Liouville operators with potential such as considered here.

We now turn to the content of each section.
In Section \ref{s2}, we begin by defining what it means for a densely defined, closable operator $T$ to be $K$-invariant, showing that this immediately implies $T^*$ and $\overline{T}$ are as well. From there, we characterize when a restriction of $T^*$ is $K$-invariant in Lemma \ref{lemma:A3}, before applying these results to study nonnegative self-adjoint and maximally dissipative extensions. In particular, we show that the Friedrichs and Krein--von Neumann extensions of a nonnegative $K$-invariant symmetric operator are always $K$-invariant, while the Friedrichs extension of a $K$-invariant sectorial operator is as well. We then turn to the necessary and sufficient conditions for a self-adjoint/maximally dissipative extension to be $K$-invariant in Theorem \ref{thm:A.10}, before showing how the conditions simplify when $K$ is unitary. We end the section by constructing in Theorem \ref{thm:2.13} a class of nonnegative self-adjoint extensions of a strictly positive $K$-invariant symmetric operator, $S$, which are also $K$-invariant, and then investigate additional properties of such extensions whenever $S$ has finite defect index.

In Section \ref{s3}, we then apply our abstract framework to the setting of general Sturm--Liouville operators whenever $K:\Lr\to\Lr$ is given by $(Kf)(x)=A(x) f(\phi(x))$, under appropriate assumptions on $A$ and $\phi$ (see Lemma \ref{lemmabdinv}). Sufficient conditions on the coefficient functions $p,q,r$ for a Sturm--Liouville operator associated with the differential expression $\tau=(1/r(x))[-(d/dx)p(x)(d/dx)+q(x)]$ to be $K$-invariant are shown in Theorem \ref{thm3.5} to be
\begin{align}
\begin{split}\label{1.1}
r(x)&=Cr\big(\phi^{-1}(x)\big),\quad p(x)=\big[A\big(\phi^{-1}(x)\big)\big]^2\phi'\big(\phi^{-1}(x)\big)p\big(\phi^{-1}(x)\big),\\
q(x)&=\frac{A\big(\phi^{-1}(x)\big)}{\phi'\big(\phi^{-1}(x)\big)}\big\{A\big(\phi^{-1}(x)\big) q \big(\phi^{-1}(x)\big)-\big(A^{[1]}\big)'\big(\phi^{-1}(x)\big)\big\}.
\end{split}
\end{align}
We would like to point out that the equation satisfied by $r$ is Schr\"oder's equation \cite{Schr70}, that is, the equation is the eigenvalue equation for the composition operator sending $f$ to $f\big(\phi^{-1}(\dott)\big)$ with eigenvalue $C^{-1}$.
Furthermore, whenever $A=1$, the resulting equation satisfied by $p$ is the so-called Julia's equation \cite{AB15}. In fact, when $A$ is constant, the equation for $1/p$ can be integrated to arrive at the same Schr\"oder's equation as for $r$ but with eigenvalue $A^2$ now (similarly for $q$). As Schr\"oder's and Julia's equations have proven relevant to many areas (dynamical systems, chaos theory, renormalization groups, etc.), it would be of interest to study the properties of their generalizations in \eqref{1.1}. For more details see Remark \ref{RemSch}.

We further show in Theorem \ref{t3.7} what additional assumptions on $A$ and $\phi$ at the endpoints $x=a,b$ are needed for self-adjoint extensions to be $K$-invariant, an interesting implication of which is Corollary \ref{c3.8}, characterizing the boundary conditions that can describe the Krein--von Neumann extension (assuming a strictly positive minimal operator). We illustrate these results by multiple explicit examples in Section \ref{sub3.1} which yield nontrivial $K$-invariant operators. For instance, the minimal (and maximal) operators associated with the Schr\"odinger differential expression
\begin{align}
\tau=-\frac{d^2}{dx^2}+\dfrac{\g}{\big(1-e^{-\mu^{1/2}x}\big)^{2}}+\dfrac{\mu}{4},\quad 
\g\in(-\mu/4,\infty),\ \mu\in(0,\infty),\ x\in(0,\infty),
\end{align}
are shown to be $K$-invariant where $(Kf)(x)=A_{c,\mu}(x)f(\phi_{c,\mu}(x))$ with
\begin{align}
A_{c,\mu}(x)=\big[1+ce^{-\mu^{1/2}x}\big]^{1/2},\ \phi_{c,\mu}(x)=-\mu^{-1/2}\ln\bigg[\dfrac{(1+c)e^{-\mu^{1/2}x}}{1+ce^{-\mu^{1/2}x}}\bigg],\quad c,\mu\in(0,\infty),\ x\in(0,\infty).
\end{align}

\section{Abstract Framework}\label{s2}

\begin{definition}
Let $K\in\cB(\cH)$ $($the space of bounded operators in $\mathcal{H}$$)$ be a boundedly invertible operator and $T$ be a densely defined and closable operator in a Hilbert space $\cH$. We say that $T$ is \emph{$K$-invariant} if
\begin{equation} \label{eq:conjinv}
K^*TK=T.
\end{equation} 
This means that $\cD(T)=\cD(K^*TK)$ and that $K^*TKf=Tf$ for every $f\in\cD(T)=\cD(K^*TK)$. 
\end{definition}
\begin{remark} \label{rem:invariant}
$(i)$ Observe that $\cD(T)=\cD(K^*TK)=\cD(TK)$ implies $\cD(T)=K\cD(T)=K^{-1}\cD(T)$.\\[1mm]
$(ii)$ If $T$ is $K$-invariant, then $T$ is also $K^n$-invariant for all $n\in\bbZ$.\\[1mm]
$(iii)$ If $K$ is a unitary operator, then $T$ being $K$-invariant is equivalent to $T$ and $K$ commuting.
\end{remark}

For the entirety of this section we assume once and for all the following:
\begin{hypothesis}
The operator $K\in\mathcal{B}(\cH)$ is boundedly invertible and the operator $T$ is a densely defined and closable operator in $\cH$.
\end{hypothesis}

\begin{proposition} \label{prop:2.4}
If $T$ is $K$-invariant, then so are $T^*$ and $\overline{T}$. 
\end{proposition}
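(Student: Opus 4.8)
The plan is to derive the $K$-invariance of $T^*$ by taking adjoints of the defining relation $K^*TK=T$, and then to obtain the statement for $\overline{T}$ by feeding $T^*$ back into that same argument.

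For $T^*$, I would rely on the standard rules for adjoints of products involving a bounded factor. Since $K$ is boundedly invertible, Remark \ref{rem:invariant}$(i)$ gives $\cD(TK)=K^{-1}\cD(T)=\cD(T)$, so $TK$ is densely defined and its adjoint is well behaved. The two facts I would invoke are: $(a)$ for $B\in\cB(\cH)$ and $A$ densely defined, $(BA)^*=A^*B^*$; and $(b)$ for $B\in\cB(\cH)$ boundedly invertible and $A$ densely defined, $(AB)^*=B^*A^*$. Writing $T=K^*TK=K^*(TK)$ and applying $(a)$ with $B=K^*$ yields $T^*=(K^*(TK))^*=(TK)^*(K^*)^*=(TK)^*K$; applying $(b)$ with $B=K$ then gives $(TK)^*=K^*T^*$. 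Combining the two produces $T^*=K^*T^*K$, which is precisely \eqref{eq:conjinv} for $T^*$.

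The step that genuinely uses the bounded invertibility of $K$ is rule $(b)$: in general one only has $(AB)^*\supseteq B^*A^*$, and I would either cite this or verify equality by a direct computation, observing that for fixed $g$ the identity $\langle A(Bf),g\rangle=\langle f,B^*A^*g\rangle$ holds for all $f\in\cD(AB)$ exactly when $g\in\cD(A^*)$, because $Bf$ ranges over all of $\cD(A)$ as $f$ ranges over $\cD(AB)=B^{-1}\cD(A)$. The same bookkeeping shows $\cD(T^*)=K^{-1}\cD(T^*)$, so the domain clause of $K$-invariance is automatic once the operator identity is in place; this domain tracking is the only real obstacle, and it is dispatched by the homeomorphism properties of $K$ and $K^{-1}$.

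For $\overline{T}$, I would use $\overline{T}=T^{**}$, valid because $T$ is closable by hypothesis. Closability of $T$ also guarantees that $T^*$ is densely defined, and $T^*$ is automatically closed, hence closable; thus $T^*$ satisfies the standing hypotheses and is a legitimate input to the argument above. Applying the already-established implication ``$K$-invariance passes to the adjoint'' to the operator $T^*$ — which we have just shown to be $K$-invariant — yields that $(T^*)^*=\overline{T}$ is $K$-invariant as well, completing the proof.
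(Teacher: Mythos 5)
Your argument is correct and follows essentially the same route as the paper: taking adjoints of $K^*TK=T$ via the product rules $(K^*(TK))^*=(TK)^*K$ and $(TK)^*=K^*T^*$ (the paper cites Weidmann, Satz 2.43b and 2.43c for exactly these two steps), then applying the result once more to $T^*$ to handle $\overline{T}=T^{**}$. Your additional care about where bounded invertibility of $K$ enters and about the domain bookkeeping is a sound elaboration of the same proof, not a different one.
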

\begin{proof}
Using that $T$ is $K$-invariant, this follows from $T^*=(K^*TK)^*=(TK)^*K=K^*T^*K$, where the second equality follows from \cite[Satz 2.43b]{Weidmann} and the last equality from \cite[Satz 2.43c]{Weidmann}. A repeated application of this result to $\overline{T}=T^{**}$ shows that $\overline{T}$ is also $K$-invariant.
\end{proof}

\begin{lemma} Assume that $T$ is $K$-invariant and let $\hat{T}\subseteq T^*$ be a restriction of $T^*$. Then $\hat{T}$ is $K$-invariant if and only if $K\cD(\hat{T})=\cD(\hat{T})$. 
\label{lemma:A3}
\end{lemma}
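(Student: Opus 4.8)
The plan is to leverage that $T^*$ is itself $K$-invariant—this is precisely Proposition \ref{prop:2.4}, giving $K^*T^*K=T^*$ on $\cD(T^*)\supseteq\cD(\hat{T})$—and then transfer this identity between $T^*$ and its restriction $\hat{T}$ using only the stated domain hypothesis. The argument splits cleanly into the two implications, and in both the bounded invertibility of $K$ is what makes the domain bookkeeping work.

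For the forward direction I would argue abstractly: assuming $\hat{T}$ is $K$-invariant, Remark \ref{rem:invariant}$(i)$ applied to $\hat{T}$ in place of $T$ shows that the equality $\cD(\hat{T})=\cD(K^*\hat{T}K)=\cD(\hat{T}K)$ already forces $\cD(\hat{T})=K\cD(\hat{T})=K^{-1}\cD(\hat{T})$. In particular $K\cD(\hat{T})=\cD(\hat{T})$, which is the asserted condition, so nothing further is needed here.

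For the converse I would assume $K\cD(\hat{T})=\cD(\hat{T})$ and proceed in two steps. First, match the domains: since $K$ is everywhere defined and boundedly invertible, one has $\cD(K^*\hat{T}K)=\cD(\hat{T}K)=K^{-1}\cD(\hat{T})$, and applying $K^{-1}$ to the hypothesis $K\cD(\hat{T})=\cD(\hat{T})$ yields $K^{-1}\cD(\hat{T})=\cD(\hat{T})$; hence $\cD(K^*\hat{T}K)=\cD(\hat{T})$. Second, check that the operators agree pointwise: for $f\in\cD(\hat{T})$ the hypothesis gives $Kf\in K\cD(\hat{T})=\cD(\hat{T})\subseteq\cD(T^*)$, so using $\hat{T}\subseteq T^*$ we get $\hat{T}Kf=T^*Kf$, and then the $K$-invariance of $T^*$ furnishes $K^*\hat{T}Kf=K^*T^*Kf=T^*f=\hat{T}f$. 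Combining the two steps gives $K^*\hat{T}K=\hat{T}$.

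I do not expect a genuine obstacle in this proof; it is essentially a substitution argument. The only point demanding care is the domain accounting—specifically the identity $\cD(\hat{T}K)=K^{-1}\cD(\hat{T})$ and the equivalence of $K\cD(\hat{T})=\cD(\hat{T})$ with $K^{-1}\cD(\hat{T})=\cD(\hat{T})$—both of which depend on $K\in\cB(\cH)$ being boundedly invertible. Once the domains are shown to coincide, the verification of equality on that common domain reduces to invoking $\hat{T}\subseteq T^*$ together with the previously established $K$-invariance of $T^*$.
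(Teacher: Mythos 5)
Your proof is correct and follows essentially the same route as the paper: necessity of $K\cD(\hat{T})=\cD(\hat{T})$ via Remark \ref{rem:invariant}$(i)$, and sufficiency by combining the domain identity with the $K$-invariance of $T^*$ from Proposition \ref{prop:2.4} to compute $K^*\hat{T}Kf=K^*T^*Kf=T^*f=\hat{T}f$. Your version merely spells out the domain bookkeeping $\cD(K^*\hat{T}K)=K^{-1}\cD(\hat{T})=\cD(\hat{T})$ a bit more explicitly than the paper does.
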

\begin{proof} First note that by Remark \ref{rem:invariant} $(i)$, it is necessary that $K\cD(\hat{T})=\cD(\hat{T})$ for $\hat{T}$ to be $K$-invariant. Now, assume $K\cD(\hat{T})=\cD(\hat{T})$, which implies $\cD(K^*\hat{T}K)=\cD(\hat{T})$. Then, for any $f\in\cD(\hat{T})$, we get
\begin{equation}
K^*\hat{T}K f=K^*T^*K f= T^*f=\hat{T}f,
\end{equation}
where we used that by Proposition \ref{prop:2.4}, the operator $T^*$ is $K$-invariant. This finishes the proof.
\end{proof}

\subsection{\texorpdfstring{$K$}{K}-invariant nonnegative self-adjoint and maximally dissipative extensions}
In this section, we study the $K$-invariance of the nonnegative self-adjoint and maximally dissipative extensions of a given nonnegative symmetric and $K$-invariant operator $S$. Recall that a symmetric operator $S$ is called nonnegative if 
\begin{equation}
\langle f,Sf\rangle\geq 0\quad\forall f\in\cD(S).
\end{equation}
In this case, we will write $S\geq 0$. If, in addition, there exists a positive constant $\varepsilon>0$ such that
\begin{equation}
\langle f,Sf\rangle\geq \varepsilon \|f\|^2\quad\forall f\in\cD(S),
\end{equation}
we call $S$ strictly positive and write $S\geq \varepsilon I$. A celebrated result in the theory of self-adjoint extensions is that among all nonnegative self-adjoint extensions of a given nonnegative symmetric operator $S$, there are two distinct ones, the Friedrichs extension $S_F$ and the Krein--von Neumann extension $S_K$ \cite{Kr47}. They are characterized by the property that any other nonnegative self-adjoint extension $\hat{S}$ of $S$ satisfies
\begin{equation}
0\leq S_K\leq \hat{S}\leq S_F,
\end{equation}
where the partial order $``A_1\leq A_2"$ for two arbitrary nonnegative self-adjoint operators is defined as 
\begin{equation}
A_1\leq A_2\quad \Leftrightarrow\quad \cD(A_1^{1/2})\supseteq\cD(A_2^{1/2})\:\:\mbox{and}\:\: \|A_1^{1/2}f\|\leq\|A_2^{1/2}f\|
\end{equation}
for all $f\in\cD(A_2^{1/2})$. Following the presentation in \cite{AGMST10}, we provide the following useful characterizations of $S_F$ (due to Freudenthal \cite{F36}) and $S_K$ (due to Ando and Nishio \cite{AN70}). 
\begin{proposition} \label{prop:2.6}
Let $S\geq 0$ be a nonnegative symmetric operator. Then $S_F$ and $S_K$ are given by
\begin{align}
S_F:\:\cD(S_F)&=\big\{f\in\cD(S^*)\: |\: \exists (f_j)_{j\in\mathbb{N}}\subset\cD(S) \mbox{ such that } \notag\\&\quad\quad \lim_{n\rightarrow\infty}\|f_n-f\|=0\mbox{ and } \langle (f_n-f_m), S(f_n-f_m)\rangle\overset{n,m\rightarrow\infty}{\longrightarrow}0\big\},\notag\\
S_F&=S^*\upharpoonright_{\cD(S_F)},\\
S_K:\:\cD(S_K)&=\big\{f\in\cD(S^*)\: |\: \exists (f_j)_{j\in\mathbb{N}}\subset\cD(S) \mbox{ such that } \notag\\&\qquad\lim_{n\rightarrow\infty}\|S^*(f_n-f)\|=0\mbox{ and } \langle (f_n-f_m), S(f_n-f_m)\rangle\overset{n,m\rightarrow\infty}{\longrightarrow}0\big\},\notag\\
S_K&=S^*\upharpoonright_{\cD(S_K)}.
\end{align}
\end{proposition}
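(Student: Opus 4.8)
The plan is to handle both extensions through the nonnegative sesquilinear form $\mathfrak{s}(f,g)=\langle f,Sg\rangle$ on $\cD(S)$, which is closable because $S\ge 0$ is symmetric. The observation that organizes everything is that every self-adjoint extension $\hat S$ of $S$ satisfies $S\subseteq\hat S=\hat S^*\subseteq S^*$; in particular $S_F\subseteq S^*$ and $S_K\subseteq S^*$, so in both cases the extension is \emph{literally} the restriction of $S^*$ to its domain, and only the two domains remain to be identified. Thus the whole content is the description of $\cD(S_F)$ and $\cD(S_K)$ as the indicated subsets of $\cD(S^*)$.

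For the Friedrichs extension I would invoke the first representation theorem for closed nonnegative forms: $S_F$ is the self-adjoint operator associated with the closure $\overline{\mathfrak{s}}$, whose form domain $\cD(\overline{\mathfrak{s}})$ is exactly the set of $f\in\cH$ admitting a sequence $(f_n)\subset\cD(S)$ with $\|f_n-f\|\to 0$ that is Cauchy in the form seminorm, i.e. $\langle f_n-f_m,S(f_n-f_m)\rangle\to 0$. Since $S_F\subseteq S^*$ and, by the representation theorem, $\cD(S_F)\subseteq\cD(\overline{\mathfrak{s}})$, the inclusion $\cD(S_F)\subseteq\cD(S^*)\cap\cD(\overline{\mathfrak{s}})$ is immediate. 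For the reverse inclusion, take $f\in\cD(S^*)\cap\cD(\overline{\mathfrak{s}})$; approximating in form norm and using symmetry of $S$ gives $\overline{\mathfrak{s}}(g,f)=\langle Sg,f\rangle=\langle g,S^*f\rangle$ first for $g\in\cD(S)$ and then, by density together with the bound $|\langle g,S^*f\rangle|\le\|g\|\,\|S^*f\|$, for all $g\in\cD(\overline{\mathfrak{s}})$. The functional $g\mapsto\overline{\mathfrak{s}}(g,f)$ is therefore $\cH$-bounded, so the representation theorem places $f\in\cD(S_F)$ with $S_Ff=S^*f$. This yields $\cD(S_F)=\cD(S^*)\cap\cD(\overline{\mathfrak{s}})$ and identifies $S_F=S^*\!\upharpoonright_{\cD(S_F)}$, which is the stated Freudenthal description.

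The Krein--von Neumann extension is the genuine obstacle, since it is the \emph{minimal} nonnegative self-adjoint extension and hence corresponds to the \emph{largest} admissible form domain (by the partial order, $S_K\le\hat S\le S_F$ forces $\cD(S_K^{1/2})\supseteq\cD(S_F^{1/2})$); it is not the operator produced by the obvious closed form and so cannot be read off the first representation theorem directly. Here I would follow Ando--Nishio: the distinguishing feature of $S_K$ is that $\ker(S_K)=\ker(S^*)$ is as large as possible, which suggests measuring approximation not in the $\cH$-norm but in the graph seminorm $f\mapsto\|S^*f\|$. Concretely, one shows that $f\in\cD(S_K)$ precisely when there is $(f_n)\subset\cD(S)$ with $\|S^*(f_n-f)\|=\|Sf_n-S^*f\|\to 0$ together with the same form-Cauchy condition, and that on this domain $S_K=S^*\!\upharpoonright$. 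The argument mirrors the Friedrichs case with the roles of the base norm and the form seminorm interchanged. An alternative route, clean when $S$ is strictly positive (the setting of most later applications), is the inversion duality $S_K^{-1}=(\overline{S}^{\,-1})_F$: applying the already-proved Friedrichs characterization to the nonnegative symmetric inverse $\overline{S}^{\,-1}$ on $\ran(S)$ and inverting transports the $\cH$-convergence of the Friedrichs description into the $\|S^*(\dott)\|$-convergence appearing in the statement.

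The main difficulty I anticipate is precisely this last identification: establishing that the graph-norm completion reproduces the minimal extension $S_K$ (equivalently, realizes the maximal form domain) and that the resulting operator is again a restriction of $S^*$. Controlling the density and range issues---for instance that $\ran(S)$ need not be dense, so the inversion duality must be carried out on $\ker(S^*)^\perp$ and then reassembled---is the delicate bookkeeping, whereas the Friedrichs half is essentially a direct application of the representation theorem.
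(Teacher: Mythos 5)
The paper does not prove this proposition at all: it is quoted from the literature, with the Friedrichs characterization attributed to Freudenthal \cite{F36} and the Krein--von Neumann characterization to Ando--Nishio \cite{AN70}, following the presentation in \cite{AGMST10}. So there is no in-paper argument to compare yours against, and your proposal should be judged as a standalone proof attempt.

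On that basis, your Friedrichs half is complete and correct: the identity $\overline{\mathfrak{s}}(g,f)=\langle Sg,f\rangle=\langle g,S^*f\rangle$ for $g\in\cD(S)$, extended by form-norm density to all $g\in\cD(\overline{\mathfrak{s}})$ with the bound $|\langle g,S^*f\rangle|\le\|g\|\,\|S^*f\|$, is exactly what the first representation theorem needs to place $f$ in $\cD(S_F)$ with $S_Ff=S^*f$, and the forward inclusion $\cD(S_F)\subseteq\cD(S^*)\cap\cD(\overline{\mathfrak{s}})$ follows as you say from $S\subseteq S_F=S_F^*\subseteq S^*$ together with the representation theorem. The Krein--von Neumann half, however, is not yet a proof. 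The sentence ``one shows that $f\in\cD(S_K)$ precisely when there is $(f_n)\subset\cD(S)$ with $\|S^*(f_n-f)\|\to 0$ together with the form-Cauchy condition'' restates the claim rather than establishing it, and ``the argument mirrors the Friedrichs case with the roles of the base norm and the form seminorm interchanged'' is a heuristic that does not survive inspection: the Friedrichs argument rests on the representation theorem for the closed form $\overline{\mathfrak{s}}$, and there is no analogous closed form whose associated operator is $S_K$ that you have exhibited, so there is nothing for the ``mirrored'' argument to invoke. Your alternative route via the inversion duality $S_K^{-1}=(\overline{S}^{\,-1})_F$ on $\overline{\ran(S)}$ is the honest way to do it when $S\geq\varepsilon I$, and you correctly flag the bookkeeping on $\ker(S^*)^\perp$ when the range is not dense, but this too is left unexecuted, and the general nonnegative (not strictly positive) case --- which is the case the proposition actually asserts and which Theorem \ref{thm:A.7} uses --- requires either a regularization $S+\varepsilon$ followed by a limiting argument or the original Ando--Nishio construction. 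To make the Krein half rigorous you would need to carry out one of these in full; as written it is a citation of \cite{AN70} dressed as a sketch, which is no worse than what the paper does, but is not an independent proof.
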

Using this characterization, we are now able to prove that the Friedrichs and Krein--von Neumann extensions of a given nonnegative and $K$-invariant self-adjoint operator are also always $K$-invariant:

\begin{theorem} \label{thm:A.7}
Let $S\geq 0$ be a nonnegative symmetric operator which is $K$-invariant. Then its Friedrichs and Krein--von Neumann extensions are also $K$-invariant.
\end{theorem}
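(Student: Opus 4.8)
The plan is to reduce everything to a statement about domains. Since $S$ is $K$-invariant, Proposition \ref{prop:2.4} guarantees that $S^*$ is $K$-invariant as well. Both $S_F$ and $S_K$ are restrictions of $S^*$ by Proposition \ref{prop:2.6}, so Lemma \ref{lemma:A3} applies: $S_F$ (resp.\ $S_K$) is $K$-invariant if and only if $K\cD(S_F)=\cD(S_F)$ (resp.\ $K\cD(S_K)=\cD(S_K)$). Thus it suffices to show that $K$ preserves these two domains, and by Remark \ref{rem:invariant}$(ii)$ the operator $S$ is also $K^{-1}$-invariant, so it is enough to establish the single inclusions $K\cD(S_F)\subseteq\cD(S_F)$ and $K\cD(S_K)\subseteq\cD(S_K)$; the reverse inclusions then follow by running the same argument with $K$ replaced by $K^{-1}$.

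The two facts I would isolate first are: (a) $K\cD(S)=\cD(S)$ (Remark \ref{rem:invariant}$(i)$) and likewise $K\cD(S^*)=\cD(S^*)$ applied to the $K$-invariant operator $S^*$; and (b) the sesquilinear form of $S$ is $K$-invariant, i.e.\ $\langle Kg,SKh\rangle=\langle g,K^*SKh\rangle=\langle g,Sh\rangle$ for all $g,h\in\cD(S)$, where $Kh\in\cD(S)$ by (a). For the Friedrichs domain, take $f\in\cD(S_F)$ with an approximating sequence $(f_n)\subset\cD(S)$ as in Proposition \ref{prop:2.6}. Set $g_n:=Kf_n$, which again lies in $\cD(S)$ by (a). Then $\|g_n-Kf\|\le\|K\|\,\|f_n-f\|\to 0$, while $\langle g_n-g_m,S(g_n-g_m)\rangle=\langle f_n-f_m,S(f_n-f_m)\rangle\to 0$ by (b). Since $Kf\in\cD(S^*)$ by (a), this exhibits $Kf\in\cD(S_F)$, giving $K\cD(S_F)\subseteq\cD(S_F)$.

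For the Krein--von Neumann domain the only change is the first convergence condition, which now involves $\|S^*(\dott)\|$. With $g_n:=Kf_n$ as above, one uses the $K$-invariance of $S^*$ to move $K$ through $S^*$: from $K^*S^*K=S^*$ one obtains $S^*K=(K^*)^{-1}S^*$ on $\cD(S^*)$, so that $\|S^*(g_n-Kf)\|=\|(K^*)^{-1}S^*(f_n-f)\|\le\|(K^*)^{-1}\|\,\|S^*(f_n-f)\|\to 0$, while the form condition is preserved verbatim by (b). Hence $K\cD(S_K)\subseteq\cD(S_K)$, and together with the $K^{-1}$ argument this yields $K\cD(S_K)=\cD(S_K)$, so the proof concludes via Lemma \ref{lemma:A3}. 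The computations are all routine; the only point requiring care is the bookkeeping in the Krein case, namely recognizing that one cannot bound $\|S^*K(\dott)\|$ by the boundedness of $K$ (as $S^*$ is unbounded), but must instead commute $K$ past $S^*$ using the $K$-invariance of $S^*$ and then exploit the boundedness of $(K^*)^{-1}$.
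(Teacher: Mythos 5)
Your proposal is correct and follows essentially the same route as the paper: invoke Proposition \ref{prop:2.4} for the $K$-invariance of $S^*$, transport the Freudenthal/Ando--Nishio approximating sequences via $g_n:=Kf_n$, handle the Krein case by commuting $K$ past $S^*$ to get the bound $\|(K^*)^{-1}\|\,\|S^*(f_n-f)\|$, and conclude with Lemma \ref{lemma:A3}. The only (harmless) cosmetic difference is that you explicitly note $Kf\in\cD(S^*)$ and reduce the two-sided domain equality to one inclusion via $K^{-1}$-invariance, whereas the paper simply states that the reverse inclusion follows by an analogous argument.
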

\begin{proof} First note that by Proposition \ref{prop:2.4}, the adjoint $S^*$ is also $K$-invariant.

Now, suppose $f\in\cD(S_F)$. Let us show that $Kf\in\cD(S_F)$ as well.  By Proposition \ref{prop:2.6}, since $f\in\cD(S_F)$, there exists a sequence $(f_n)\subset \cD(S)$ such that $\|f-f_n\|\rightarrow 0$ as $n\rightarrow\infty$ and $\langle (f_n-f_m),S(f_n-f_m)\rangle\rightarrow 0$ as $n,m\rightarrow\infty$. Define the sequence $(g_n)_{n\in\mathbb{N}}$ with $g_n:=Kf_n\in K\cD(S)=\cD(S)$. Since $K$ is bounded, we have
\begin{equation}
\|g_n-Kf\|=\|K(f_n-f)\|\leq \|K\|\|f_n-f\|\overset{n\rightarrow\infty}{\longrightarrow}0.
\end{equation}
Likewise, due to $K$-invariance of $S$, we get
\begin{align} \label{eq:2.14}
\langle (g_n-g_m),S(g_n-g_m)\rangle&=\langle K(f_n-f_m),SK(f_n-f_m)\rangle\\
&=\langle f_n-f_m,K^*SK(f_n-f_m)\rangle=\langle f_n-f_m,S(f_n-f_m)\rangle\overset{n,m\rightarrow\infty}{\longrightarrow}0,\notag
\end{align}
which shows that $Kf\in\cD(S_F)$. A completely analogous argument shows that if $f\in\cD(S_F)$, then $K^{-1}f\in\cD(S_F)$ as well, implying $K\cD(S_F)=\cD(S_F)$ and thus by Lemma \ref{lemma:A3} that $S_F$ is $K$-invariant.

Using Proposition \ref{prop:2.6} again, the argument to show $\cD(S_K)=K\cD(S_K)$ is very similar: Assume $f\in\cD(S_K)$, which means there exists a sequence $(f_n)_{n\in\mathbb{N}}\subset \cD(S)$ such that $\|S^*(f-f_n)\|\rightarrow 0$ as $n\rightarrow\infty$ and $\langle (f_n-f_m),S(f_n-f_m)\rangle\rightarrow 0$ as $n,m\rightarrow\infty$. Arguing as in \eqref{eq:2.14}, it follows that the sequence $(g_n)_{n\in\mathbb{N}}\subseteq K\cD(S)=\cD(S)$, where $g_n:=K f_n$, satisfies $\langle (g_n-g_m),S(g_n-g_m)\rangle\rightarrow 0$ as $n,m\rightarrow \infty$. It remains to show that $\|S^*(Kf-g_n)\|\rightarrow 0$ as $n\rightarrow\infty$, which follows from the $K$-invariance of $S^*$:
\begin{align}
\|S^*(Kf-g_n)\|&=\|S^*K(f-f_n)\|=\|(K^*)^{-1}K^*S^*K(f-f_n)\|\notag\\
&=\|(K^*)^{-1}S^*(f-f_n)\|\leq \|(K^*)^{-1}\|\|S^*(f-f_n)\|\overset{n \rightarrow \infty}{\longrightarrow}0\;,
\end{align}
which implies $Kf\in\cD(S_K)$. By a completely analogous argument, it follows that if $f\in\cD(S_K)$, then so is $K^{-1}f$ and therefore $K\cD(S_K)=\cD(S_K)$. By Lemma \ref{lemma:A3}, this implies that $S_K$ is $K$-invariant.
\end{proof}

\begin{example}
Let $\cH=L^2(0,\infty)$ and the nonnegative closed symmetric operator $S$ be given by
\begin{equation}
S:\:\cD(S)=\{f\in H^2(0,\infty)\: |\: f(0)=f'(0)=0\},\quad f\mapsto -f''.
\end{equation}
Note that $S$ is nonnegative, but not strictly positive. It can be verified straightforwardly that all nonnegative self-adjoint extensions of $S$ are given by $\{S_\mu\}_{\mu\in [0,\infty]}$, where $S_\mu$ is defined as follows:
\begin{equation}
S_\mu:\:\cD(S_\mu)=\{f\in H^2(0,\infty)\: |\: f'(0)=\mu f(0)\},\quad f\mapsto -f'',
\end{equation}
with the understanding that $``\mu=\infty"$ corresponds to a Dirichlet condition at zero. It can also be verified that the Friedrichs extension $S_F$ of $S$ correspond to $\mu=\infty$, while its Krein--von Neumann extension $S_K$ corresponds to a Neumann condition at zero, that is, $\mu=0$. Thus, $S_F=S_\infty$ and $S_K=S_0$.
Now, for a fixed $\lambda>0$, $\lambda\neq 1$, we define the scaling transformation $K:L^2(0,\infty)\rightarrow L^2(0,\infty)$ via
\begin{equation}
(Kf)(x)=\frac{1}{\sqrt{\lambda}}f(\lambda x), \text{ with adjoint $K^*$ given by }\ (K^*g)(x)=\frac{1}{\lambda^{3/2}}g(x/\lambda).
\end{equation}
By direct calculation, it can be verified that $S$ is $K$-invariant and thus by Theorem \ref{thm:A.7} so are $S_F$ and $S_K$.
Due to Lemma \ref{lemma:A3}, we only need to check whether $K\cD(S_\mu)=\cD(S_\mu)$ in order to verify whether $S_\mu$ is $K$-invariant. But for any $f\in H^2(0,\infty)\setminus \cD(S)$ such that $f'(0)=\mu f(0)$, this means that we need to require $\lambda^{1/2}f'(0)=(Kf)'(0)=\mu (Kf)(0)=\mu \lambda^{-1/2}f(0)$, which is not possible for $\lambda\neq 1$ and $\mu\in(0,\infty)$. For this example, this shows that the Friedrichs and Krein--von Neumann extensions $S_F$ and $S_K$ of $S$ are the only ones that are $K$-invariant.
\end{example}

Theorem \ref{thm:A.7} can be generalized to show that the Friedrichs extension of a $K$-invariant sectorial operator is also $K$-invariant. Recall that a densely defined operator $A$ in a Hilbert space $\cH$ is called \emph{sectorial} if its numerical range is contained in a sector within the open right half-plane \cite[p.\ 280]{Kato},
\begin{equation}
\{\langle \psi,A\psi\rangle\: |\: \psi\in\cD(A),\: \|\psi\|=1\}\subseteq \{ z\in\mathbb{C}\: |\: -\eta\leq \arg(z)\leq \eta\} \text{ for some $\eta\in[0,\pi/2)$.}
\end{equation}
Moreover, $A$ is called \emph{maximally sectorial} if there exists no nontrivial sectorial extension of $A$. One then closes the domain of $A$ with respect to the norm $\|\cdot\|_A$ given by
\begin{equation}
\|\psi\|_A^2:=\|\psi\|^2+\Re\langle \psi,A\psi\rangle
\end{equation}
    to obtain the \emph{form domain} $\cQ(A):=\overline{\cD(A)}^{\|\cdot\|_A}$  of $A$ (cf.\ \cite[VI, \S 3]{Kato}). The adjoint $A_F^*$ of the Friedrichs extension $A_F$ is maximally sectorial and given by
    \begin{equation} \label{eq:Friedom}
A_F^*:\quad\cD(A_F^*)=\cQ(A)\cap\cD(A^*),\quad A_F^*=A^*\upharpoonright_{\cD(A_F^*)},
    \end{equation}
which is a result shown in \cite[Remarks right after Thm.\ 1]{Arli97}. 

\begin{theorem}
If $A$ is a K-invariant sectorial operator, then its Friedrichs extension is also K-invariant.
\end{theorem}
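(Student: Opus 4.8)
The plan is to follow the template of Theorem~\ref{thm:A.7} but to argue through the adjoint $A_F^*$ rather than through $A_F$ directly, exploiting the explicit description of $A_F^*$ recorded in \eqref{eq:Friedom}. Concretely, I would prove that $A_F^*$ is $K$-invariant and then conclude that $A_F$ is $K$-invariant from Proposition~\ref{prop:2.4} together with the identity $A_F=(A_F^*)^*$, valid since $A_F$ is closed and densely defined (being maximally sectorial).

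To set up Lemma~\ref{lemma:A3}, note that $A\subseteq A_F$ yields $A_F^*\subseteq A^*$ after taking adjoints; since $A$ is $K$-invariant, Lemma~\ref{lemma:A3} applied with $T=A$ reduces the $K$-invariance of $A_F^*$ to the single set identity $K\cD(A_F^*)=\cD(A_F^*)$, that is, $K\big(\cQ(A)\cap\cD(A^*)\big)=\cQ(A)\cap\cD(A^*)$. As $K$ is injective it distributes over intersections, so it suffices to prove the two identities $K\cD(A^*)=\cD(A^*)$ and $K\cQ(A)=\cQ(A)$ separately. The first is immediate: $A^*$ is $K$-invariant by Proposition~\ref{prop:2.4}, so Remark~\ref{rem:invariant}$(i)$ gives $K\cD(A^*)=\cD(A^*)$.

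The main obstacle is the second identity, $K\cQ(A)=\cQ(A)$, i.e.\ that $K$ preserves the form domain. Here the decisive computation is that for $\psi\in\cD(A)$ one has $K\psi\in K\cD(A)=\cD(A)$ and, exactly as in \eqref{eq:2.14},
\begin{equation}
\langle K\psi,AK\psi\rangle=\langle \psi,K^*AK\psi\rangle=\langle\psi,A\psi\rangle,
\end{equation}
so that the quadratic-form part of $\|\cdot\|_A$ is unchanged by $K$, while $\|K\psi\|\le\|K\|\,\|\psi\|$ controls the $\cH$-part. Since $\Re\langle\psi,A\psi\rangle\ge0$ by sectoriality, this gives $\|K\psi\|_A^2=\|K\psi\|^2+\Re\langle\psi,A\psi\rangle\le\max\{\|K\|^2,1\}\,\|\psi\|_A^2$, so $K$ is bounded on $(\cD(A),\|\cdot\|_A)$. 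Repeating the computation with $K^{-1}$, which is legitimate because $A$ is $K^{-1}$-invariant by Remark~\ref{rem:invariant}$(ii)$, shows that $K^{-1}$ is $\|\cdot\|_A$-bounded as well.

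It remains to pass to the completion $\cQ(A)=\overline{\cD(A)}^{\|\cdot\|_A}$. Because $\|\cdot\|\le\|\cdot\|_A$, every $\|\cdot\|_A$-convergent sequence also converges in $\cH$; hence for $\psi\in\cQ(A)$ and $\psi_n\in\cD(A)$ with $\psi_n\to\psi$ in $\|\cdot\|_A$, the images $K\psi_n$ form a $\|\cdot\|_A$-Cauchy sequence whose $\|\cdot\|_A$-limit lies in $\cQ(A)$ and whose $\cH$-limit is $K\psi$, forcing the two to coincide. Thus $K\cQ(A)\subseteq\cQ(A)$, and the same argument for $K^{-1}$ gives $K^{-1}\cQ(A)\subseteq\cQ(A)$, whence $\cQ(A)\subseteq K\cQ(A)$ and therefore $K\cQ(A)=\cQ(A)$. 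Combined with $K\cD(A^*)=\cD(A^*)$ this yields $K\cD(A_F^*)=\cD(A_F^*)$; Lemma~\ref{lemma:A3} then makes $A_F^*$ $K$-invariant, and Proposition~\ref{prop:2.4} transfers this to $A_F=(A_F^*)^*$, completing the argument.
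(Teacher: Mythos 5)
Your proposal is correct and follows essentially the same route as the paper: reduce to $K\cD(A_F^*)=\cD(A_F^*)$ via Lemma~\ref{lemma:A3}, split the intersection $\cQ(A)\cap\cD(A^*)$ using injectivity of $K$, handle $\cD(A^*)$ by Proposition~\ref{prop:2.4}, and prove $K\cQ(A)=\cQ(A)$ by the computation $\Re\langle K\psi,AK\psi\rangle=\Re\langle\psi,A\psi\rangle$ together with $\cH$-boundedness of $K$ and $K^{-1}$. Your packaging of this last step as $\|\cdot\|_A$-boundedness of $K$ on $\cD(A)$ (using $\Re\langle\psi,A\psi\rangle\ge0$) is a mild rephrasing of the paper's direct verification that $(Kf_n)$ is $\|\cdot\|_A$-Cauchy, not a different argument.
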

\begin{proof}
Since $A$ is $K$-invariant, by Lemma \ref{lemma:A3} we need to prove that $K\cD(A_F^*)=\cD(A_F^*)$ in order to show that $A_F^*$ is $K$-invariant. An application of Proposition \ref{prop:2.4} will then imply that also $A_F$ is $K$-invariant. Using the injectivity of $K$ and $\eqref{eq:Friedom}$, we have $K\cD(A_F^*)=K(\cD(A^*)\cap\cQ(A))=K\cD(A^*)\cap K\cQ(A)=\cD(A^*)\cap\ K\cQ(A)$, where we used that $K\cD(A^*)=\cD(A^*)$, since $A^*$ is $K$-invariant. It remains to show that $K\cQ(A)=\cQ(A)$ to conclude that $A_F$ is $K$-invariant. Let $f\in\cQ(A)$ and $(f_n)_{n\in\N}\subset\cD(A)$ such that $\|f-f_n\|\rightarrow 0$ as $n\rightarrow\infty$ and $\|f_n-f_m\|_A\rightarrow 0$ as $n,m\rightarrow \infty$. It follows that $Kf\in\cQ(A)$ since $(Kf_n)_{n\in\N}\subset\cD(A)$, $\|Kf-Kf_n\|\leq\|K\|\|f-f_n\|\rightarrow 0$ as $n\rightarrow\infty$, and 
\begin{align}
\|Kf_n-Kf_n\|_A^2&=\|K(f_n-f_m)\|^2+\Re\langle K(f_n-f_m),AK(f_n-f_m)\rangle\notag\\
&\leq \|K\|^2\|f_n-f_m\|^2  +\Re\langle f_n-f_m,K^*AK(f_n-f_m)\rangle\notag\\&= \|K\|^2\|f_n-f_m\|^2  +\Re\langle f_n-f_m,A(f_n-f_m)\rangle,
\end{align}
which goes to zero as $n,m\rightarrow \infty$, since $(f_n)_{n\in\N}$ is Cauchy with respect to $\|\cdot\|_A$. Analogously, it can be shown that if $f\in\cQ(A)$, then so is $K^{-1}f$, which implies $K\cQ(A)=\cQ(A)$. This completes the proof.
\end{proof}

In what follows, we will focus on the situation when $S$ is a strictly positive operator: $S\geq \varepsilon I$ for some $\varepsilon>0$. We begin with following useful lemma: 
\begin{lemma} \label{lemma:A.8}
Let $S$ be a strictly positive symmetric operator which is $K$-invariant. Then $S_F^{-1}$ is $K^*$-invariant. Moreover, $K\ker(S^*)=\ker(S^*)$.
\end{lemma}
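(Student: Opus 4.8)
The plan is to treat the two assertions separately, using in both cases only the $K$-invariance of the relevant operators together with the bounded invertibility of $K$. For the first assertion I would begin by recording two facts. First, since $S$ is strictly positive and $K$-invariant, Theorem \ref{thm:A.7} guarantees that $S_F$ is $K$-invariant, i.e.\ $K^*S_FK=S_F$. Second, because the Friedrichs extension preserves the lower bound, $S_F\ge\varepsilon I$, so $S_F$ is boundedly invertible with $S_F^{-1}\in\cB(\cH)$. These two facts reduce the claim to a purely algebraic inversion.

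The key step for the first part is to invert the operator identity $K^*S_FK=S_F$. Since $S_F$ is $K$-invariant, Remark \ref{rem:invariant}$(i)$ gives $K\cD(S_F)=\cD(S_F)$, so $K$ restricts to a bijection of $\cD(S_F)$ onto itself. Viewing $S_F$ as a bijection $\cD(S_F)\to\cH$, the operator $K^*S_FK\colon\cD(S_F)\to\cH$ is then a composition of bijections whose inverse is $K^{-1}S_F^{-1}(K^*)^{-1}$. Since $K^*S_FK=S_F$, their inverses coincide, giving $K^{-1}S_F^{-1}(K^*)^{-1}=S_F^{-1}$ as an identity of bounded operators on $\cH$. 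Conjugating by $K$ on the left and $K^*$ on the right then yields $KS_F^{-1}K^*=S_F^{-1}$, which is exactly the statement that $S_F^{-1}$ is $K^*$-invariant in the sense of \eqref{eq:conjinv} with $K$ replaced by $K^*$.

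For the second assertion I would use that $S^*$ is $K$-invariant (Proposition \ref{prop:2.4}), so that $K^*S^*Kf=S^*f$ for all $f\in\cD(S^*)$, and that $\cD(S^*)=K\cD(S^*)$ by Remark \ref{rem:invariant}$(i)$. Given $f\in\ker(S^*)$, we have $Kf\in\cD(S^*)$, and evaluating the invariance identity at $f$ gives $K^*\big(S^*(Kf)\big)=S^*f=0$; since $K^*$ is injective, this forces $S^*(Kf)=0$, i.e.\ $Kf\in\ker(S^*)$. Hence $K\ker(S^*)\subseteq\ker(S^*)$. Applying the same argument with $K^{-1}$ in place of $K$ --- legitimate because $S$, and therefore $S^*$, is $K^{-1}$-invariant by Remark \ref{rem:invariant}$(ii)$ and Proposition \ref{prop:2.4} --- yields $K^{-1}\ker(S^*)\subseteq\ker(S^*)$, i.e.\ the reverse inclusion, and the two together give $K\ker(S^*)=\ker(S^*)$.

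I expect the only delicate point to be the bookkeeping of domains in the inversion step: one must ensure that $K$ genuinely maps $\cD(S_F)$ bijectively onto itself before inverting the unbounded identity $K^*S_FK=S_F$, which is precisely where the $K$-invariance of $S_F$ (not merely the boundedness of $K$) is needed, and where strict positivity is essential to guarantee that $S_F^{-1}$ is bounded and globally defined so that the algebra is unambiguous.
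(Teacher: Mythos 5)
Your proposal is correct and follows essentially the same route as the paper: both invert the identity $K^*S_FK=S_F$ (obtained from Theorem \ref{thm:A.7} and the boundedness of $S_F^{-1}$ due to strict positivity) to get $KS_F^{-1}K^*=S_F^{-1}$, and both obtain $K\ker(S^*)=\ker(S^*)$ from the $K$-invariance of $S^*$ together with the injectivity of $K^*$. The only cosmetic difference is that the paper proves the reverse inclusion by observing directly that $K\eta\in\ker(S^*)$ forces $S^*\eta=K^*S^*K\eta=0$, whereas you rerun the forward argument with $K^{-1}$; these are equivalent.
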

\begin{proof}
First note that $\mathcal{H}=\cD(S_F^{-1})=K^*\cD(S_F^{-1})$. Now, using that $K^*S_FK=S_F$ by Theorem \ref{thm:A.7}, we obtain $S_F^{-1}=(K^*S_FK)^{-1}=K^{-1}S_F^{-1}(K^*)^{-1}$. This then implies $S_F^{-1}=KS_F^{-1}K^*$.
For the second assertion, let $\eta\in\ker(S^*)$. Then, $0=S^*\eta=K^*S^*K\eta$, which implies $K\eta\in\ker(S^*)$. Conversely, if $K\eta\in\ker(S^*)$, then $0=K^*S^*K\eta=S^*\eta$ and thus $\eta\in\ker(S^*)$. This finishes the proof.
\end{proof}
Next, we introduce the notion of dissipative and maximally dissipative operators:
\begin{definition}
Let $A$ be a densely defined operator in a Hilbert space $\cH$. Then it is called \emph{dissipative} if $\Im\langle f,Af\rangle\geq 0$
for all $f\in\cD(A)$. If, in addition, there is no nontrivial dissipative extension of $A$, then it is called \emph{maximally dissipative}.
\end{definition}
The following result was shown by Grubb \cite{Gr68} (see also \cite[Thm.\ 7.2.4]{F17}):
\begin{proposition} \label{prop:A.10}
Let $S$ be a strictly positive, closed, symmetric operator. Then all nonnegative self-adjoint/maximally dissipative extensions of $S$ are of the form
\begin{align}
S_B:\:\cD(S_B)&=\cD(S)\dot{+}\{S_F^{-1}Bf+f|f\in\cD(B)\}\dot{+}\{S_F^{-1}\eta|\eta\in \cD(B)^\perp\cap\ker(S^*)\}, \notag\\
S_B&=S^*\upharpoonright_{\cD(S_B)}, \label{eq:2.15}
\end{align}
where $B$ is a nonnegative self-adjoint/maximally dissipative operator in $\overline{\cD(B)}\subseteq\ker(S^*)$.
\end{proposition}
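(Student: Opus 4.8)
The plan is to reduce everything to a description of admissible domains between $\cD(S)$ and $\cD(S^*)$, since every extension in question acts as a restriction of $S^*$. The backbone is a boundary decomposition of the maximal domain. Write $N:=\ker(S^*)$. Because $S$ is closed and strictly positive, its range is closed and $\cH=\ran(S)\oplus N$, while $S_F\geq\varepsilon I$ is boundedly invertible (as already used in Lemma \ref{lemma:A.8}). First I would establish the algebraic direct sum
\[
\cD(S^*)=\cD(S)\,\dot{+}\,S_F^{-1}N\,\dot{+}\,N,
\]
by showing $\cD(S^*)=\cD(S_F)\,\dot{+}\,N$ (for $u\in\cD(S^*)$ the element $u-S_F^{-1}S^*u$ lies in $N$, and the intersection is trivial since $S_F$ is injective) and $\cD(S_F)=\cD(S)\,\dot{+}\,S_F^{-1}N$ (splitting $S_F u$ along $\ran(S)\oplus N$). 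Consequently every $u\in\cD(S^*)$ has unique components $u=u_0+S_F^{-1}\xi_u+\zeta_u$ with $u_0\in\cD(S)$ and $\xi_u,\zeta_u\in N$, and $S^*u=Su_0+\xi_u$.

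The decisive step is an abstract Green's identity. Expanding $\langle u,S^*v\rangle$ and using $N\perp\ran(S)$ together with the self-adjointness of $S_F$ (so that $\langle S_F^{-1}\xi_u,Sv_0\rangle=\langle\xi_u,v_0\rangle$), all diagonal contributions cancel under antisymmetrization and one is left with
\[
\langle u,S^*v\rangle-\langle S^*u,v\rangle=\langle\zeta_u,\xi_v\rangle-\langle\xi_u,\zeta_v\rangle.
\]
Thus $(\zeta_u,\xi_u)$ serves as boundary data valued in $N$, with $\cD(S_F)=\{u:\zeta_u=0\}$. For a domain of the form \eqref{eq:2.15}, the boundary data of a generic element $u=s+(S_F^{-1}Bf+f)+S_F^{-1}\eta$ are $\zeta_u=f\in\cD(B)$ and $\xi_u=Bf+\eta$ with $\eta\in\cD(B)^\perp\cap N$; that is, the domain encodes exactly the linear relation $P_{\overline{\cD(B)}}\,\xi_u=B\zeta_u$ together with the free component $\eta\perp\cD(B)$.

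For the sufficiency direction I would feed this boundary data into the quadratic form. Setting $u=v$ above and using $\langle f,\eta\rangle=0$ gives $\Im\langle u,S^*u\rangle=\Im\langle f,Bf\rangle$, so dissipativity (respectively symmetry) of $S_B$ follows at once from that of $B$; nonnegativity is then obtained by expanding $\Re\langle u,S^*u\rangle$ and completing the square against the strictly positive form $\langle S_F^{-1}\,\cdot\,,\cdot\,\rangle$, reducing the claim to $\langle f,Bf\rangle\geq 0$. Maximality amounts to the statement that the boundary relation attached to $S_B$ is maximal dissipative (resp. self-adjoint) in $N$. The converse runs backwards: given a nonnegative self-adjoint or maximally dissipative extension $\hat S$, I would form its boundary relation $\Theta:=\{(\zeta_u,\xi_u):u\in\cD(\hat S)\}$, use the Green's identity to show $\Theta$ is maximal dissipative (resp. self-adjoint) in $N$, and then split $\Theta$ into its operator part $B$ on $\overline{\cD(B)}=\overline{\dom\Theta}$ and its purely multivalued part carried by $N\ominus\overline{\cD(B)}=\cD(B)^\perp\cap N$, thereby recovering the three summands in \eqref{eq:2.15}.

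The step I expect to be the main obstacle is the bookkeeping of the multivalued (the ``$\eta\in\cD(B)^\perp\cap N$'') part in the converse direction: one must show that a maximal dissipative extension forces its boundary relation into the normal form of the graph of a maximal dissipative operator $B$ on a closed subspace plus the full ``$\infty$''-part on the orthogonal complement, and that no genuinely multivalued dissipative relation outside this normal form can arise. Proving nonnegativity rather than mere symmetry in the sufficiency direction is the other delicate point, since it needs the form-theoretic completion of squares instead of the purely algebraic boundary-form identity. Everything else is the routine verification that the three subspaces are linearly independent and that $S_B$ is closed.
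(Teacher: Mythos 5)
The paper does not actually prove Proposition \ref{prop:A.10}; it is quoted from Grubb \cite{Gr68} (see also \cite[Thm.\ 7.2.4]{F17}), so there is no in-paper argument to compare against line by line. Your sketch is, however, precisely the standard route underlying those references: the decomposition $\cD(S^*)=\cD(S)\dot{+}S_F^{-1}\ker(S^*)\dot{+}\ker(S^*)$ (valid because $S\geq\varepsilon I$ is closed, so $\ran(S)$ is closed, $\cH=\ran(S)\oplus\ker(S^*)$, and $S_F$ is boundedly invertible), the abstract Green's identity $\langle u,S^*v\rangle-\langle S^*u,v\rangle=\langle\zeta_u,\xi_v\rangle-\langle\xi_u,\zeta_v\rangle$ (which I have checked: the cross terms cancel exactly as you claim, using $\langle S_F^{-1}\xi_u,Sv_0\rangle=\langle\xi_u,v_0\rangle$ and $\ker(S^*)\perp\ran(S)$), and the identification of extensions with linear relations in $\ker(S^*)$. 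Your completion of the square, $\langle u,S^*u\rangle=\|S_F^{1/2}u_0+S_F^{-1/2}\xi_u\|^2+\langle\zeta_u,\xi_u\rangle$, is the correct mechanism for nonnegativity. The two points you flag as delicate are indeed where the substance lies: (a) in the converse direction one must show that maximal dissipativity (resp.\ self-adjointness) of $\hat S$ forces the boundary relation $\Theta$ to be a maximal dissipative (resp.\ self-adjoint) relation in $\ker(S^*)$, and then invoke the normal form of such a relation as the graph of an operator $B$ on $\overline{\cD(B)}$ plus the purely multivalued part $\{0\}\times(\cD(B)^\perp\cap\ker(S^*))$ --- this uses that the boundary map $u\mapsto(\zeta_u,\xi_u)$ is surjective onto $\ker(S^*)\times\ker(S^*)$ with $\ker\Gamma_0=\cD(S_F)$ self-adjoint; and (b) in the converse direction for nonnegativity one must also extract $\langle f,Bf\rangle\geq 0$ from the completed square, which requires an approximation argument over $u_0\in\cD(S)$. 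Both are carried out in the cited sources; as a blind reconstruction your outline is correct and faithful to the proof the paper is relying on, but it is a proof sketch rather than a complete proof until those two steps are written out.
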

The following result provides the necessary and sufficient condition that the auxiliary operator $B$ describing a self-adjoint/maximally dissipative extension $S_B$ of $S$ has to satisfy in order to ensure that $S_B$ is $K$-invariant as well.
\begin{theorem} \label{thm:A.10}
Let $S$ be a strictly positive, closed, symmetric operator. Then $S_B$ is $K$-invariant if and only if $\cD(B)=\cD(K^*BK)$ and
\begin{equation}  \label{eq:conjcond}
P_{\overline{\cD(B)}}K^*BK\upharpoonright_{\mathcal{D}(B)}=B,
\end{equation}
where $P_{\overline{\cD(B)}}$ denotes the orthogonal projection onto $\overline{\cD(B)}$.
\end{theorem}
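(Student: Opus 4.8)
The plan is to reduce the whole statement, via Lemma~\ref{lemma:A3}, to the single equality $K\cD(S_B)=\cD(S_B)$: since $S_B\subseteq S^*$ is a restriction of $S^*$ and $S$ is $K$-invariant, $S_B$ is $K$-invariant precisely when $K$ preserves $\cD(S_B)$. To exploit this I would encode $\cD(S_B)$ by boundary data. As $S$ is strictly positive and closed, $\ran(S)$ is closed, $\cH=\ran(S)\oplus\ker(S^*)$ orthogonally, and the decomposition
\begin{equation*}
\cD(S^*)=\cD(S)\dotplus S_F^{-1}\ker(S^*)\dotplus\ker(S^*)
\end{equation*}
holds, so each $u\in\cD(S^*)$ has unique boundary values $\Gamma_0u,\Gamma_1u\in\ker(S^*)$, namely the $\ker(S^*)$-component of $u$ and the $\ker(S^*)$-preimage of its $S_F^{-1}\ker(S^*)$-component, and $(\Gamma_0u,\Gamma_1u)$ sweeps out all of $\ker(S^*)\times\ker(S^*)$. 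Reading off the formula in Proposition~\ref{prop:A.10}, one would check that $u\in\cD(S_B)$ is equivalent to the graph condition $\Gamma_0u\in\cD(B)$ and $P_{\overline{\cD(B)}}\Gamma_1u=B\Gamma_0u$, with $(I-P_{\overline{\cD(B)}})\Gamma_1u$ supplying the free $\eta$-term. Hence $K\cD(S_B)=\cD(S_B)$ becomes an equality of two subsets of $\ker(S^*)\times\ker(S^*)$, and both necessity and sufficiency in the theorem will drop out of that single set equality.

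The heart of the argument, and the step I expect to be most delicate, is to compute how $K$ acts on boundary data. By Lemma~\ref{lemma:A.8}, $K\ker(S^*)=\ker(S^*)$; set $\kappa:=K\!\upharpoonright_{\ker(S^*)}$, a bounded bijection of $\ker(S^*)$ with adjoint $\kappa^*=P_{\ker(S^*)}K^*\!\upharpoonright_{\ker(S^*)}$ inside $\ker(S^*)$. Taking orthogonal complements in $K\ker(S^*)=\ker(S^*)$ gives $K^*\ran(S)=\ran(S)$, so $K$ is lower-triangular for $\cH=\ran(S)\oplus\ker(S^*)$ while $K^*$ is upper-triangular. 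Writing $v=s+S_F^{-1}h+f$ and using $KS_F^{-1}=S_F^{-1}(K^*)^{-1}$ (from the $K^*$-invariance of $S_F^{-1}$ in Lemma~\ref{lemma:A.8}), one finds $Kv=Ks+S_F^{-1}(K^*)^{-1}h+\kappa f$. The obstacle is that $K^*$, hence $(K^*)^{-1}$, need \emph{not} preserve $\ker(S^*)$, so $S_F^{-1}(K^*)^{-1}h$ is not yet in the middle summand. I would resolve this by splitting $(K^*)^{-1}h=a+(\kappa^*)^{-1}h$ with $a:=(I-P_{\ker(S^*)})(K^*)^{-1}h\in\ran(S)$ and then invoking $S_F^{-1}\ran(S)\subseteq\cD(S)$ (immediate from $S\subseteq S_F$ and $\ker(S^*)=\ran(S)^\perp$) to absorb $S_F^{-1}a$ into $\cD(S)$. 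This yields the clean transformation rules $\Gamma_0(Kv)=\kappa\,\Gamma_0v$ and $\Gamma_1(Kv)=(\kappa^*)^{-1}\Gamma_1v$.

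With these rules, $u\in K\cD(S_B)$, i.e.\ $K^{-1}u\in\cD(S_B)$, translates into $\kappa^{-1}\Gamma_0u\in\cD(B)$ together with $P_{\overline{\cD(B)}}\kappa^*\Gamma_1u=B\kappa^{-1}\Gamma_0u$. Comparing first coordinates of the two graphs forces $\kappa\cD(B)=\cD(B)$, which is exactly $\cD(B)=\cD(K^*BK)$; this is the necessity of the domain condition. Granting it, $\kappa\overline{\cD(B)}=\overline{\cD(B)}$, and taking orthogonal complements within $\ker(S^*)$ gives $\kappa^*N=N$ for $N:=\overline{\cD(B)}^\perp\cap\ker(S^*)$; thus over each $\phi\in\cD(B)$ the two affine $\psi$-fibres share the common linear part $N$ and therefore coincide if and only if the particular solution $\psi=B\phi$ of the first fibre also solves the second, i.e.\ $P_{\overline{\cD(B)}}\kappa^*B\phi=B\kappa^{-1}\phi$ for all $\phi\in\cD(B)$. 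Replacing $\phi$ by $\kappa\phi$ and noting $P_{\overline{\cD(B)}}K^*BK=P_{\overline{\cD(B)}}\kappa^*B\kappa$ on $\cD(B)$, this is precisely the identity $P_{\overline{\cD(B)}}K^*BK\!\upharpoonright_{\cD(B)}=B$, which completes the equivalence.
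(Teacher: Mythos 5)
Your argument is correct, and it rests on the same pillars as the paper's proof---Lemma \ref{lemma:A3} to reduce everything to $K\cD(S_B)=\cD(S_B)$, Lemma \ref{lemma:A.8} for $KS_F^{-1}=S_F^{-1}(K^*)^{-1}$ and $K\ker(S^*)=\ker(S^*)$, and the splitting of $K^*$-images of kernel elements into their $\ran(S)\oplus\ker(S^*)$ components---but you package these ingredients quite differently. The paper works directly with the representation $\psi=f_0+S_F^{-1}Bf+f+S_F^{-1}\eta$: it proves sufficiency by explicitly rearranging $K^{-1}\psi$ back into that form, and it proves the two necessity claims by two separate contradiction arguments exploiting the uniqueness of the direct-sum components (your $\kappa_1+\kappa_2$, $\sigma_1+\sigma_2$ decompositions appear there verbatim as the splitting of $K^*\eta$ and $P_{\cD(B)^\perp}K^*Bf$). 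You instead encode $\cD(S_B)$ as a graph over $\ker(S^*)\times\ker(S^*)$ via the boundary maps $(\Gamma_0,\Gamma_1)$, derive once and for all the transformation rules $\Gamma_0(Kv)=\kappa\,\Gamma_0v$ and $\Gamma_1(Kv)=(\kappa^*)^{-1}\Gamma_1v$, and then read off both directions of the equivalence from a single comparison of graphs: the first coordinates force $\kappa\cD(B)=\cD(B)$, and the affine fibres over each $\phi\in\cD(B)$, which share the linear part $N=\overline{\cD(B)}^{\perp}\cap\ker(S^*)$ once $\kappa^*N=N$ is observed, force \eqref{eq:conjcond}. This buys a uniform treatment of necessity and sufficiency and makes the meaning of \eqref{eq:conjcond} transparent (matching of the particular solutions $B\phi$ and $(\kappa^*)^{-1}B\kappa^{-1}\phi$ modulo $N$), at the modest cost of setting up the boundary formalism. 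The steps you flag as delicate all check out: $S_F^{-1}a\in\cD(S)$ for $a\in\ran(S)$ because $S_F^{-1}Sg=g$, and $P_{\overline{\cD(B)}}K^*=P_{\overline{\cD(B)}}\kappa^*$ on $\ker(S^*)$ because $\overline{\cD(B)}\perp\ran(S)$, which justifies the final rewriting of your fibre condition as \eqref{eq:conjcond}.
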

\begin{proof}
First assume that $\cD(B)=\cD(K^*BK)$ and \eqref{eq:conjcond} is satisfied. By Lemma \ref{lemma:A3}, we must show that $K\cD(S_B)=\cD(S_B)$, or, equivalently, $\cD(S_B)=K^{-1}\cD(S_B)$.
Let $\psi\in\cD(S_B)$, that is, there exist unique $f_0\in\cD(S), f\in\cD(B)$, and $\eta\in\cD(B)^\perp\cap\ker(S^*)$ such that $\psi=f_0+S^{-1}_FBf+f+S_F^{-1}\eta.$
Then
\begin{align}
K^{-1} \psi\hspace{-1pt}&=K^{-1} f_0+K^{-1} S^{-1}_FBf+K^{-1}f+K^{-1} S_F^{-1}\eta \notag\\&= K^{-1} f_0+S_F^{-1}K^*Bf+K^{-1}f+S_F^{-1}K^*\eta \\
&= K^{-1} f_0+S_F^{-1}P_{{\cD(B)}^\perp}K^*Bf+S_F^{-1}P_{\overline{\cD(B)}}K^*BKK^{-1}f+K^{-1}f+S_F^{-1}K^*\eta, \notag
\label{eq:2.18}
\end{align}
where $P_{{\cD(B)}^\perp}$ denotes the orthogonal projection onto $\cD(B)^\perp$ and we have used that by Lemma \ref{lemma:A.8}, $S_F^{-1}$ is $K^*$-invariant and thus $K^{-1}S_F^{-1}=S_F^{-1}K^*$. Now, since by assumption, $K^{-1}f\in\cD(B)$ and \eqref{eq:conjcond} holds, we can simplify 
\begin{equation}
S_F^{-1}P_{\overline{\cD(B)}}K^*BKK^{-1}f=S_F^{-1}BK^{-1}f,
\end{equation}
and therefore obtain
\begin{equation} \label{eq:2.22}
K^{-1}\psi=K^{-1} f_0+S_F^{-1}P_{{\cD(B)}^\perp}K^*Bf+S_F^{-1}BK^{-1}f+K^{-1}f+S_F^{-1}K^*\eta.
\end{equation}

Let us now argue that $K^{-1}\psi\in\cD(S_B)$, that is, we need to show there exist $\tilde{f}_0\in\cD(S)$, $\tilde{f}\in\cD(B)$, and $\tilde{\eta}\in\ker(S^*)\cap\cD(B)^\perp$ such that $K^{-1}\psi=\tilde{f}_0+S_F^{-1}B\tilde{f}+\tilde{f}+S_F^{-1}\tilde{\eta}$.

Since we assumed $S$ to be $K$-invariant, we have $K^{-1} f_0\in\cD(S)$. Moreover, note that if $\eta\in\cD(B)^\perp\cap\ker(S^*)$, then for any $\phi\in\cD(B)$:
\begin{equation}
\langle \phi,K^*\eta\rangle=\langle K\phi,\eta\rangle=0,
\end{equation}
since $K\phi\in\cD(B)$ by assumption. Thus, $K^*\eta\in\cD(B)^\perp$ and, trivially, we also have $P_{{\cD(B)}^\perp}K^*Bf\in\cD(B)^\perp$. Next, decompose $K^*\eta=\kappa_1+\kappa_2$ and $P_{{\cD(B)}^\perp}K^*Bf=\sigma_1+\sigma_2$, where $\kappa_1, \sigma_1\in\ran(S)$ and $\kappa_2, \sigma_2\in\ker(S^*)$. (Note that since $S$ is assumed to be strictly positive and closed, its range $\ran(S)$ is a closed subspace.) We have $\kappa_1, \sigma_1\in\ran(S)=\ker(S^*)^\perp\subseteq\cD(B)^\perp$. Consequently, since $\cD(B)^\perp$ is a linear space, we conclude that $\kappa_2,\sigma_2\in\cD(B)^\perp$ as well and therefore $\kappa_2,\sigma_2\in\ker(S^*)\cap\cD(B)^\perp$. Thus, Equation \eqref{eq:2.22} can be rewritten as 
\begin{align}
K^{-1}\psi&=K^{-1} f_0+S_F^{-1}BK^{-1}f+K^{-1}f+S_F^{-1}P_{{\cD(B)}^\perp}K^*Bf+S_F^{-1}K^*\eta\notag\\
&= (K f_0+S_F^{-1}(\kappa_1+\sigma_1))+S_F^{-1}BK^{-1}f+K^{-1}f+S_F^{-1}(\kappa_2+\sigma_2),
\end{align}
where $(K^{-1} f_0+S_F^{-1}(\kappa_1+\sigma_1))\in\cD(S)$, which follows from $K^{-1}f_0\in\cD(S)$ and $S_F^{-1}(\kappa_1+\sigma_1)\in\cD(S)$ since $\kappa_1, \sigma_1\in\ran(S)$. Moreover, $K^{-1}f\in\cD(B)$ and $(\kappa_2+\sigma_2)\in\ker(S^*)\cap\cD(B)^\perp$. This implies that if $\psi\in\cD(S_B)$, then $K^{-1}\psi\in\cD(S_B)$, that is, the inclusion $\cD(S_B)\subseteq K\cD(S_B)$. The other inclusion $K\cD(S_B)\subseteq\cD(S_B)$ follows from a completely analogous argument.

Next, let us show that $\cD(B)=\cD(K^*BK)$ is necessary for $S_B$ to be $K$-invariant. Assume this is not the case. Then there either exists an $f\in\cD(B)$ such that $Kf\notin \cD(B)$ or $K^{-1}f\notin \cD(B)$. We will lead the case $K^{-1}f\notin\cD(B)$ to a contradiction, while the case $Kf\notin\cD(B)$ can be treated analogously. Since $f\in\cD(B)$, this implies that $\psi:=S_F^{-1}Bf+f\in\cD(S_B)$. For $\cD(S_B)=K\cD(S_B)$ to be true, we therefore need that $K^{-1}\psi=K^{-1}S_F^{-1}Bf+K^{-1}f=S_F^{-1}K^*Bf+K^{-1}f\in\cD(S_B)$. If this is true, then there exist unique $\tilde{f}_0\in\cD(S)$, $\tilde{f}\in\cD(B)$, and $\tilde{\eta}\in\ker(S)\cap\cD(B)^\perp$ such that 
\begin{equation}
K^{-1}\psi=K^{-1}S_F^{-1}Bf+K^{-1}f=S_F^{-1}K^*Bf+K^{-1}f=\tilde{f}_0+S_F^{-1}B\tilde{f}+\tilde{f}+S_F^{-1}\tilde{\eta},
\end{equation}
which can be rewritten as 
\begin{equation}
(K^{-1}f-\tilde{f})=\tilde{f}_0+S_F^{-1}(B\tilde{f}+\tilde{\eta}-K^*Bf).
\end{equation}
Now, note that the left-hand side of this is equation is an element of $\ker(S^*)$, where we used that $K^{-1}f\in\ker(S^*)$ by Lemma \ref{lemma:A.8}. However, the right-hand side is an element of $\cD(S_F)$, from which we conclude both sides must be $0$ since $\cD(S_F)\cap\ker(S^*)=\{0\}$. Thus, $K^{-1}f=\tilde{f}\in\cD(B)$, which contradicts $K^{-1}f\notin\cD(B)$. Hence we conclude that $\cD(B)=\cD(K^*BK)$ is necessary for $S_B$ to be $K$-invariant.

Now, assume $\cD(B)=\cD(K^*BK)$, but \eqref{eq:conjcond} is not satisfied, that is, there exists $f\in\cD(B)$ such that 
\begin{equation} \label{eq:2.28}
P_{\overline{\cD(B)}}K^*BKK^{-1}f\neq BK^{-1}f.
\end{equation}
Again, we have $\psi:=S_F^{-1}Bf+f\in\cD(S_B)$ and for $\cD(S_B)=K\cD(S_B)$ to be true, it must hold that
\begin{equation} \label{eq:2.29}
K^{-1}\psi=S_F^{-1}K^*Bf+K^{-1}f=\tilde{f}_0+S_F^{-1}B\tilde{f}+\tilde{f}+S_F^{-1}\tilde{\eta}
\end{equation}
for some $\tilde{f}_0\in\cD(S)$, $\tilde{f}\in\cD(B)$, and $\tilde{\eta}\in\ker(S^*)\cap\cD(B)^\perp$. Arguing exactly as before, it follows that $K^{-1}f=\tilde{f}$ and therefore, Equation \eqref{eq:2.29} can be rewritten as
\begin{equation}
S_F^{-1}\left(P_{\overline{\cD(B)}}K^*BKK^{-1}f-BK^{-1}f\right)=\tilde{f}_0+S_F^{-1}\left(\tilde{\eta}-P_{{\cD(B)}^\perp}K^*BKK^{-1}f\right).
\end{equation}
Again, both sides of this equation are linearly independent, since the left-hand side is an element of $S_F^{-1}\overline{\cD(B)}$, while the right-hand side lies in 
\begin{equation}
S_F^{-1}\left(\ran(S)\oplus(\ker(S^*)\cap\cD(B)^\perp)\right).
\end{equation}
Consequently, both sides must be equal to $0$. However, by \eqref{eq:2.28}, using that $S_F^{-1}$ is injective, the left-hand side is not zero, which is a contradiction. This finishes the proof.
\end{proof}
If we further require that the map $K$ be unitary, then the necessary and sufficient conditions for a nonnegative self-adjoint/maximally dissipative extension $S_B$ of a given $K$-invariant operator $S$ to also be $K$-invariant can be simplified further:
\begin{corollary}
In addition to the assumptions of Theorem \ref{thm:A.10}, assume that $K$ is a unitary operator. Then $S_B$ is $K$-invariant if and only if $\cD(B)=\cD(K^*BK)$ and $K^*BK=B$.
\end{corollary}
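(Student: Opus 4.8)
The plan is to read this off directly from Theorem \ref{thm:A.10}, the only point being that unitarity of $K$ renders the orthogonal projection $P_{\overline{\cD(B)}}$ in \eqref{eq:conjcond} redundant. First I would observe that the domain condition $\cD(B)=\cD(K^*BK)$ appears verbatim in both statements, so nothing has to be done there; it is worth recording, however, that since $K^*$ is bounded one has $\cD(K^*BK)=\{f:Kf\in\cD(B)\}=K^{-1}\cD(B)$, so this condition is simply the statement $K\cD(B)=\cD(B)$ (equivalently $K^{-1}\cD(B)=\cD(B)$).

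The key step is to verify that, under the domain condition together with the unitarity of $K$, the operator $K^*BK\upharpoonright_{\cD(B)}$ already maps into $\overline{\cD(B)}$, so that $P_{\overline{\cD(B)}}$ acts as the identity on its range. To see this I would use $K^*=K^{-1}$ together with the identity $K^{-1}\cD(B)=\cD(B)$ from the previous paragraph; passing to closures and using continuity of $K^{-1}$ gives $K^*\overline{\cD(B)}=K^{-1}\overline{\cD(B)}=\overline{\cD(B)}$. Since $B$ is an operator acting in $\overline{\cD(B)}$ we have $\ran(B)\subseteq\overline{\cD(B)}$; hence for any $f\in\cD(B)$ the element $Kf$ again lies in $\cD(B)$, so $BKf\in\overline{\cD(B)}$ and therefore $K^*BKf\in K^*\overline{\cD(B)}=\overline{\cD(B)}$. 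Consequently $P_{\overline{\cD(B)}}K^*BKf=K^*BKf$ for every $f\in\cD(B)$.

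With this in hand the equivalence is immediate. If $K^*BK=B$ on $\cD(B)$, then $P_{\overline{\cD(B)}}K^*BK=P_{\overline{\cD(B)}}B=B$ because $\ran(B)\subseteq\overline{\cD(B)}$, so \eqref{eq:conjcond} holds; conversely, if \eqref{eq:conjcond} holds, then the preceding paragraph lets us drop the projection, giving $K^*BKf=P_{\overline{\cD(B)}}K^*BKf=Bf$ for all $f\in\cD(B)$, that is $K^*BK=B$. Thus Theorem \ref{thm:A.10} specializes exactly to the asserted conditions. The only genuine content, and hence the step I expect to require the most care, is the claim that $K^*$ preserves $\overline{\cD(B)}$; everything else is bookkeeping, and this claim rests squarely on $K$ being unitary, since for a general boundedly invertible $K$ the adjoint $K^*$ need not leave $\overline{\cD(B)}\subseteq\ker(S^*)$ invariant even when $K$ does.
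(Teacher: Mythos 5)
Your proposal is correct and takes essentially the same approach as the paper: both reduce the corollary to showing $P_{\overline{\cD(B)}}K^*BKf=K^*BKf$ for $f\in\cD(B)$, which rests on $BKf\in\overline{\cD(B)}$ and the fact that $K^*=K^{-1}$ maps $\overline{\cD(B)}$ into itself. The paper establishes the latter by approximating $BKf$ with a sequence in $\cD(B)$ and using continuity of $K^*$, which is exactly your set-level identity $K^*\overline{\cD(B)}=\overline{\cD(B)}$ in disguise.
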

\begin{proof}
Using that by Theorem \ref{thm:A.10}, the extension $S_B$ is $K$-invariant if and only if $\cD(B)=\cD(K^*BK)$ and Condition \eqref{eq:conjcond} is satisfied, the corollary follows if we can show that 
\begin{equation}
P_{\overline{\cD(B)}}K^*BKf=K^*BKf
\end{equation}
for all $f\in \cD(B)$. Since $\cD(B)=\cD(K^*BK)$ and $B$ is a self-adjoint/maximally dissipative operator in $\overline{\cD(B)}$, we know that $BKf\in\overline{\cD(B)}$. Now, let $(\eta_n)\subseteq\cD(B)$ be a sequence such that $\eta_n\rightarrow BKf$. Since $K$ is unitary, $K^*=K^{-1}$, this implies $\cD(B)=\cD(K^*BK)=K^{-1}\cD(B)=K^*\cD(B)$. Hence, $K^*\eta_n\in\cD(B)$ for every $n$ and moreover $K^*\eta_n\rightarrow K^*BKf$, which therefore has to be an element of $\overline{\cD(B)}$.
\end{proof} 

Next, given a strictly positive $K$-invariant symmetric operator, we construct a class of nonnegative self-adjoint extensions that are guaranteed to also be $K$-invariant. They are characterized by the choice $B\equiv 0$, however, we can choose different closed subspaces $\mathfrak{M}$ of $\ker(S^*)$ on which $B\equiv 0$ is defined:  
\begin{theorem} \label{thm:2.13}
Let $S$ be a strictly positive, closed, symmetric operator which is $K$-invariant. Let $\mathfrak{M}$ be a closed subspace of $\ker(S^*)$ such that $K^{-1}\mathfrak{M}=\mathfrak{M}$. Then the operator $S_\mathfrak{M}$ given by
\begin{equation}
S_\mathfrak{M}:\:\cD(S_\mathfrak{M})=\cD(S)\dot{+}\mathfrak{M}\dot{+}\{S_F^{-1}\eta\: |\: \eta\in\mathfrak{M}^\perp\cap\ker(S^*)\},\quad S_\mathfrak{M}=S^*\upharpoonright_{\cD(S_\mathfrak{M})},
\end{equation}
is also $K$-invariant.
\end{theorem}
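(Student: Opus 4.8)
The plan is to recognize $S_\mathfrak{M}$ as a particular instance of the extensions $S_B$ described in Proposition \ref{prop:A.10} and then to invoke the characterization of Theorem \ref{thm:A.10}. Concretely, I would take $B$ to be the zero operator on the closed subspace $\cD(B):=\mathfrak{M}\subseteq\ker(S^*)$, so that $\overline{\cD(B)}=\mathfrak{M}$ and $\cD(B)^\perp=\mathfrak{M}^\perp$. With $B=0$ the middle component of the domain in \eqref{eq:2.15} collapses, since $\{S_F^{-1}Bf+f\mid f\in\cD(B)\}=\{f\mid f\in\mathfrak{M}\}=\mathfrak{M}$; hence $\cD(S_B)=\cD(S_\mathfrak{M})$ and $S_B=S^*\upharpoonright_{\cD(S_\mathfrak{M})}=S_\mathfrak{M}$. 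As $B=0$ is trivially nonnegative and self-adjoint on $\mathfrak{M}$, this is a legitimate case of Proposition \ref{prop:A.10}.

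It then suffices to verify the two conditions of Theorem \ref{thm:A.10}. The single observation that makes everything work is that the hypothesis $K^{-1}\mathfrak{M}=\mathfrak{M}$ also yields $K\mathfrak{M}=\mathfrak{M}$ (apply $K$ to both sides). For the domain condition, since $K$, $K^*$ are bounded and everywhere defined, one has $\cD(K^*BK)=\{\,f\mid Kf\in\cD(B)\,\}=K^{-1}\mathfrak{M}=\mathfrak{M}=\cD(B)$. For condition \eqref{eq:conjcond}, given $f\in\cD(B)=\mathfrak{M}$ we have $Kf\in K\mathfrak{M}=\mathfrak{M}=\cD(B)$, so $BKf=0$ and therefore $P_{\overline{\cD(B)}}K^*BKf=0=Bf$. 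Both hypotheses of Theorem \ref{thm:A.10} thus hold, and $S_\mathfrak{M}$ is $K$-invariant.

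I do not expect a genuine obstacle; the content is essentially bookkeeping, namely matching the $B=0$ instance of \eqref{eq:2.15} to the stated domain and exploiting the equivalence $K^{-1}\mathfrak{M}=\mathfrak{M}\Leftrightarrow K\mathfrak{M}=\mathfrak{M}$. The only point requiring mild care is the precise meaning of $\cD(K^*BK)$ for the unbounded composition, but since $K,K^*,K^{-1}$ are all bounded this reduces to the set identity above.

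Should a self-contained argument be preferred that does not route through Theorem \ref{thm:A.10}, I would instead verify $K\cD(S_\mathfrak{M})=\cD(S_\mathfrak{M})$ directly and apply Lemma \ref{lemma:A3}. Writing $\psi=f_0+m+S_F^{-1}\eta$ with $f_0\in\cD(S)$, $m\in\mathfrak{M}$, and $\eta\in\mathfrak{M}^\perp\cap\ker(S^*)$, one uses $K^{-1}\cD(S)=\cD(S)$ (Remark \ref{rem:invariant}$(i)$), $K^{-1}\mathfrak{M}=\mathfrak{M}$, and $K^{-1}S_F^{-1}=S_F^{-1}K^*$ (Lemma \ref{lemma:A.8}) to rewrite $K^{-1}\psi=K^{-1}f_0+K^{-1}m+S_F^{-1}K^*\eta$. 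The only substantive step is then to split $S_F^{-1}K^*\eta$: one first checks $K^*\eta\in\mathfrak{M}^\perp$ (because $K\mathfrak{M}=\mathfrak{M}$), decomposes $K^*\eta=\kappa_1+\kappa_2$ with $\kappa_1\in\ran(S)$ and $\kappa_2\in\ker(S^*)$, notes $\kappa_1\in\ran(S)=\ker(S^*)^\perp\subseteq\mathfrak{M}^\perp$ so that $\kappa_2\in\mathfrak{M}^\perp\cap\ker(S^*)$, and observes $S_F^{-1}\kappa_1\in\cD(S)$. This exhibits $K^{-1}\psi$ in the required form, giving one inclusion, and the reverse inclusion $K\cD(S_\mathfrak{M})\subseteq\cD(S_\mathfrak{M})$ follows symmetrically.
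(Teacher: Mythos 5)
Your proposal is correct and follows essentially the same route as the paper: identifying $S_\mathfrak{M}$ with $S_B$ for $B\equiv 0$ on $\cD(B)=\mathfrak{M}$, checking $\cD(K^*BK)=K^{-1}\mathfrak{M}=\mathfrak{M}=\cD(B)$, and noting that Condition \eqref{eq:conjcond} holds trivially for the zero operator. The alternative direct verification you sketch is a valid fallback but is not needed.
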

\begin{proof}
This immediately follows from Theorem \ref{thm:A.10} using that $S_\mathfrak{M}$ corresponds to the choice \begin{equation}
B:\quad\cD(B)=\overline{\cD(B)}=\mathfrak{M},\quad f\mapsto 0,
\end{equation}
that is, $B$ is the zero operator on $\cD(B)=\mathfrak{M}$. Since $\mathcal{D}(K^*BK)=K^{-1}\cD(B)=K^{-1}\mathfrak{M}=\mathfrak{M}=\mathcal{D}(B)$ by assumption and Condition \eqref{eq:conjcond} is always satisfied for $B\equiv 0$, this shows the corollary.
\end{proof}
\begin{remark}\label{rem2.12}
By Theorem \ref{thm:A.7}, we already know that the Krein--von Neumann extension $S$ is always $K$-invariant. Nevertheless, for the strictly positive case $S\geq \varepsilon I$, Theorem \ref{thm:2.13} provides an alternative proof of this fact using Lemma \ref{lemma:A.8} together with the choice $\mathfrak{M}=\ker(S^*)$, which corresponds to the Krein--von Neumann extension.
Moreover, assume that $\ker(S^*)$ is finite-dimensional with $\dim\ker(S^*)>1$. Then there exists at least one nontrivial proper subspace $\mathfrak{M}$ of $\ker(S^*)$, spanned by one or more eigenvectors of $K^{-1}$, such that $K^{-1}\mathfrak{M}=\mathfrak{M}$. Therefore there also exists at least one additional nonnegative $K$-invariant self-adjoint extension of $S$ that is distinct from $S_F$ and $S_K$. See Example \ref{directsumexample}.
\end{remark}

In what follows, we restrict ourselves to nonnegative self-adjoint extensions and focus on the case when the operator $S$ has finite defect index, $\dim(\ker(S^*))<\infty$. In this case, we trivially have $\cD(B)=\overline{\cD(B)}$ for any auxiliary operator $B$ defined on the finite-dimensional space $\cD(B)\subseteq\ker(S^*)$.
By Theorem \ref{thm:A.10}, for $S_B$ to be $K$-invariant, it is necessary that $\cD(B)=K\cD(B)$. This implies that $\tilde{K}:=K\upharpoonright_{\cD(B)}$ is a linear mapping from $\cD(B)$ to $\cD(B)$ and therefore unitarily equivalent to a square-matrix. For any $\zeta\in\sigma(\tilde{K})$, we introduce its root space $\mathcal{R}(\zeta,\tilde{K})$:
\begin{equation}
\mathcal{R}(\zeta,\tilde{K})=\left\{\eta\in\cD(B)\: \Big|\: \exists n\in\bbN \mbox{ such that } (\tilde{K}-\zeta)^n\eta=0\right\}.
\end{equation}
Recall that $\cD(B)$ is given by the direct sum of all root spaces corresponding to all eigenvalues of $\tilde{K}$:
\begin{equation}
\cD(B)=\mbox{span}\left\{\mathcal{R}(\zeta,\tilde{K})\: \Big|\: \zeta\in\sigma(\tilde{K})\right\}.
\end{equation}
We also introduce the subspace $\mathcal{C}$ of $\cD(B)$ given by the direct sum of root spaces corresponding to eigenvalues $\zeta$ with $|\zeta|\neq 1$:
\begin{equation} \label{eq:2.42a}
\cC:=\mbox{span}\left\{\mathcal{R}(\zeta,\tilde{K})\: \Big|\: \zeta\in\sigma(\tilde{K}),\: |\zeta|\neq 1\right\}.
\end{equation}
\begin{theorem} \label{thm:2.17}
Let $S$ be a strictly positive, closed, symmetric operator which is $K$-invariant. Moreover, assume that $\dim(\ker(S^*))<\infty$. Then for a nonnegative self-adjoint extension $S_B$ of $S$  to be $K$-invariant it is necessary that $\cC\subseteq\ker{B}$.
\end{theorem}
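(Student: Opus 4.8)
The plan is to reduce the statement to a finite-dimensional linear-algebra fact about operators preserving a nonnegative sesquilinear form. Assuming $S_B$ is $K$-invariant, I would first invoke Theorem \ref{thm:A.10}: it is then necessary that $\cD(B)=\cD(K^*BK)$ and that \eqref{eq:conjcond} holds. Since $\dim(\ker(S^*))<\infty$ we have $\cD(B)=\overline{\cD(B)}=:\mathfrak{M}$, so $P_{\overline{\cD(B)}}$ is the orthogonal projection $P_{\mathfrak{M}}$ onto the finite-dimensional space $\mathfrak{M}$, and (as noted just before the theorem) $\tilde{K}=K\!\upharpoonright_{\mathfrak{M}}$ is a bijection of $\mathfrak{M}$. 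Thus \eqref{eq:conjcond} reads $P_{\mathfrak{M}}K^*BK\eta=B\eta$ for every $\eta\in\mathfrak{M}$.

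Next I would repackage this as an invariance property of the sesquilinear form $\beta(\eta,\xi):=\langle\eta,B\xi\rangle$ on $\mathfrak{M}$, which is nonnegative and Hermitian because $B\ge 0$ is self-adjoint on $\mathfrak{M}$. For $\eta,\xi\in\mathfrak{M}$ we have $\tilde{K}\eta=K\eta$ and $BK\xi\in\mathfrak{M}$, so inserting $P_{\mathfrak{M}}$ (legitimate since $\eta\in\mathfrak{M}$) and using \eqref{eq:conjcond} gives
\begin{equation}
\beta(\tilde{K}\eta,\tilde{K}\xi)=\langle K\eta,BK\xi\rangle=\langle\eta,K^*BK\xi\rangle=\langle\eta,P_{\mathfrak{M}}K^*BK\xi\rangle=\langle\eta,B\xi\rangle=\beta(\eta,\xi).
\end{equation}
Hence $\tilde{K}$ preserves $\beta$. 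Because $B\ge 0$, one has $\beta(\eta,\eta)=0\Rightarrow B\eta=0$ (Cauchy--Schwarz for the positive semidefinite form, or $\beta(\eta,\eta)=\|B^{1/2}\eta\|^2$), so it suffices to prove that $\beta$ vanishes on $\cC$.

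The core is then the following linear-algebra claim: if $\eta\in\mathcal{R}(\zeta,\tilde{K})$ and $\xi\in\mathcal{R}(\zeta',\tilde{K})$ with $\overline{\zeta}\zeta'\ne1$, then $\beta(\eta,\xi)=0$. I would prove this by induction on the sum of the generalized-eigenvector heights of $\eta$ and $\xi$. In the base case both are genuine eigenvectors and $\beta$-invariance yields $\beta(\eta,\xi)=\beta(\tilde{K}\eta,\tilde{K}\xi)=\overline{\zeta}\zeta'\,\beta(\eta,\xi)$, forcing $\beta(\eta,\xi)=0$. For the inductive step I would write $\tilde{K}\eta=\zeta\eta+\eta_-$ and $\tilde{K}\xi=\zeta'\xi+\xi_-$ with $\eta_-:=(\tilde{K}-\zeta)\eta$ and $\xi_-:=(\tilde{K}-\zeta')\xi$ of strictly smaller height, expand $\beta(\eta,\xi)=\beta(\tilde{K}\eta,\tilde{K}\xi)$ by sesquilinearity, and isolate
\begin{equation}
(1-\overline{\zeta}\zeta')\,\beta(\eta,\xi)=\overline{\zeta}\,\beta(\eta,\xi_-)+\zeta'\,\beta(\eta_-,\xi)+\beta(\eta_-,\xi_-),
\end{equation}
whose right-hand side pairs elements of $\mathcal{R}(\zeta,\tilde{K})$ and $\mathcal{R}(\zeta',\tilde{K})$ of strictly smaller total height and hence vanishes by the inductive hypothesis; since $\overline{\zeta}\zeta'\ne1$, this gives $\beta(\eta,\xi)=0$.

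Finally, taking $\zeta'=\zeta$ with $|\zeta|\ne1$ (so that $\overline{\zeta}\zeta=|\zeta|^2\ne1$) shows that $\beta$ vanishes identically on $\mathcal{R}(\zeta,\tilde{K})$, whence $\beta(\eta,\eta)=0$ and therefore $\mathcal{R}(\zeta,\tilde{K})\subseteq\ker(B)$ for every $\zeta\in\sigma(\tilde{K})$ with $|\zeta|\ne1$; summing over all such $\zeta$ yields $\cC\subseteq\ker(B)$, as claimed. I expect the induction that handles the nilpotent (generalized-eigenvector) cross-terms to be the only delicate point: the genuine-eigenvector identity and the form-positivity conclusion are immediate, but the bookkeeping that keeps the auxiliary terms $\beta(\eta,\xi_-)$, $\beta(\eta_-,\xi)$, $\beta(\eta_-,\xi_-)$ at strictly smaller total height is where care is required.
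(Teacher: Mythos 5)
Your proposal is correct and follows essentially the same route as the paper: both extract from \eqref{eq:conjcond} the invariance $\langle \tilde{K}\psi,B\tilde{K}\psi\rangle=\langle\psi,B\psi\rangle$ of the form $\langle\cdot,B\cdot\rangle$, deduce $(1-|\zeta|^2)\|B^{1/2}\eta\|^2=0$ for eigenvectors, and then climb the Jordan chains of $\mathcal{R}(\zeta,\tilde{K})$ by induction. Your polarized two-variable induction (proving $\beta$-orthogonality of root spaces with $\overline{\zeta}\zeta'\neq1$) is a slightly more general packaging of the paper's diagonal argument, which instead uses at each step that the previous chain member already lies in $\ker(B^{1/2})$ to kill the cross terms, but the mechanism is the same.
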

\begin{proof} Let $\eta\in\cD(B)$ be an eigenvector of $\tilde{K}$ corresponding to an eigenvalue $\zeta$ with $|\zeta|\neq 1$. By Condition \eqref{eq:conjcond}, for $S_B$ to be $K$-invariant, it is necessary that
\begin{equation}
P_{\cD(B)}K^*BK\upharpoonright_{\cD(B)}=B,
\end{equation}
and thus, in particular, 
\begin{equation} \label{eq:2.39}
\langle \psi,P_{\cD(B)}K^*BK\psi\rangle=\langle \tilde{K}\psi,B\tilde{K}\psi\rangle=\langle\psi,B\psi\rangle,
\end{equation}
for every $\psi\in\cD(B)$. Choosing $\psi=\eta$ then yields the condition
\begin{align}
|\zeta|^2\|B^{1/2}\eta\|^2=|\zeta|^2\langle \eta,B\eta\rangle=\langle \tilde{K}\eta,B\tilde{K}\eta\rangle=\langle \eta,B\eta\rangle=\|B^{1/2}\eta\|^2,
\end{align}
or equivalently
\begin{equation}
(1-|\zeta|^2)\|B^{1/2}\eta\|^2=0.
\end{equation}

Since $|\zeta|\neq 1$, it follows that $\|B^{1/2}\eta\|=0$ and therefore $\eta\in\ker(B^{1/2})=\ker(B)$. Next, assume that $\tilde{\eta}\in\mathcal{R}(\zeta,\tilde{K})$ is a root vector such that $(\tilde{K}-\zeta)\tilde{\eta}=\eta$ and therefore $\tilde{K}\tilde{\eta}=\zeta\tilde{\eta}+\eta$. Plugging this into \eqref{eq:2.39} and using that $\eta\in\ker(B)$ yields the condition
\begin{equation}
\langle \tilde{K}\tilde{\eta},B\tilde{K}\tilde{\eta}\rangle=\langle \zeta\tilde{\eta}+\eta,B(\zeta\tilde{\eta}+\eta)\rangle=\|B^{1/2}(\zeta\tilde{\eta}+\eta)\|^2=|\zeta|^2\|B^{1/2}\tilde{\eta}\|^2=\|B^{1/2}\tilde{\eta}\|^2,
\end{equation}
which implies by a similar reasoning that $\tilde{\eta}\in\ker(B)$. Arguing analogously for the subsequent members of the Jordan chain spanning $\mathcal{R}(\zeta,\tilde{K})$ shows that $\mathcal{R}(\zeta,\tilde{K})\subseteq\ker(B)$. This finishes the proof.
\end{proof}
For the special case $\dim(\ker(S^*))=1$, we have the following result:
\begin{theorem}\label{t2.17}
Let $S$ be as in Theorem \ref{thm:2.13} and assume that $\dim(\ker(S^*))=1$. Let $\eta$ be a normalized vector which spans $\ker(S^*)$. Then there are the following two cases:\\[1mm] 
$(i)$  $K\eta=\zeta\eta$ with $|\zeta|\neq 1$. Then the Friedrichs extension $S_F$ and the Krein--von Neumann extension $S_K$ of $S$ are the only two $K$-invariant maximally dissipative extensions  of $S$.\\[1mm] 
$(ii)$ $K\eta=\zeta\eta$ with $|\zeta|=1$. Then all maximally dissipative extensions of $S$ are $K$-invariant.
\end{theorem}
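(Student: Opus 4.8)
The plan is to reduce everything to the one-parameter Grubb parametrization of Proposition \ref{prop:A.10} and then test the $K$-invariance criterion of Theorem \ref{thm:A.10} against it. The starting observation is that, by Lemma \ref{lemma:A.8}, $K$ maps $\ker(S^*)$ bijectively onto itself; since this space is one-dimensional and spanned by the normalized vector $\eta$, and $K$ is boundedly invertible, $\eta$ must be an eigenvector, $K\eta=\zeta\eta$ with $\zeta\neq 0$. This is exactly what produces the dichotomy on $|\zeta|$, and it shows the hypothesis $K\eta=\zeta\eta$ in the statement is automatic rather than an extra assumption.

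Next I would enumerate the maximally dissipative extensions. Because $\dim\ker(S^*)=1$, the auxiliary subspace $\cD(B)$ in Proposition \ref{prop:A.10} can only be $\{0\}$ or all of $\ker(S^*)=\operatorname{span}\{\eta\}$. The choice $\cD(B)=\{0\}$ yields the Friedrichs extension $S_F$, which is $K$-invariant in either case by Theorem \ref{thm:A.7}. The choice $\cD(B)=\operatorname{span}\{\eta\}$ forces $B$ to be multiplication by a scalar $b\in\C$ with $\Im b\ge 0$ (on a one-dimensional space maximal dissipativity is equivalent to dissipativity, since $B$ is already everywhere defined), the value $b=0$ corresponding to the Krein--von Neumann extension $S_K$.

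The heart of the argument is to evaluate the criterion of Theorem \ref{thm:A.10} on this remaining one-parameter family. I would first note that $\cD(K^*BK)=K^{-1}\operatorname{span}\{\eta\}=\operatorname{span}\{\eta\}=\cD(B)$ holds automatically, since $K^{-1}\eta=\zeta^{-1}\eta$; hence the only content is condition \eqref{eq:conjcond}. Using $K\eta=\zeta\eta$, $B\eta=b\eta$, and the projection identity $P_{\operatorname{span}\{\eta\}}K^*\eta=\langle\eta,K^*\eta\rangle\eta=\overline{\zeta}\,\eta$ (note $\langle\eta,K^*\eta\rangle=\langle K\eta,\eta\rangle=\overline{\zeta}$), a short computation gives $P_{\operatorname{span}\{\eta\}}K^*BK\eta=\zeta\,b\,\overline{\zeta}\,\eta=|\zeta|^2 b\,\eta$, so \eqref{eq:conjcond} reduces to the scalar condition $(|\zeta|^2-1)b=0$. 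This is precisely the one-dimensional instance of the computation already carried out in Theorem \ref{thm:2.17}.

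From here the two cases follow at once. If $|\zeta|\neq 1$, then $(|\zeta|^2-1)b=0$ forces $b=0$, so the only $K$-invariant extension with $\cD(B)=\operatorname{span}\{\eta\}$ is $S_K$; together with $S_F$ these are exactly the two claimed extensions, proving $(i)$ (equivalently, one invokes Theorem \ref{thm:2.17}, as here $\cC=\operatorname{span}\{\eta\}\subseteq\ker B$). If $|\zeta|=1$, then $(|\zeta|^2-1)b=0$ holds for every admissible $b$, so every extension with $\cD(B)=\operatorname{span}\{\eta\}$ is $K$-invariant; adjoining $S_F$ shows that all maximally dissipative extensions are $K$-invariant, proving $(ii)$. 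I expect the only genuinely delicate point to be the bookkeeping in the scalar computation: because $K$ need not be normal, $K^*\eta$ itself is in general \emph{not} proportional to $\eta$, so one must work with its projection $\langle\eta,K^*\eta\rangle\eta=\overline{\zeta}\,\eta$ rather than with a spurious eigenvalue relation for $K^*$.
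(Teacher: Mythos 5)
Your proposal is correct and follows essentially the same route as the paper: reduce to the Grubb parametrization, note via Lemma \ref{lemma:A.8} that $\eta$ is automatically an eigenvector of $K$, and evaluate condition \eqref{eq:conjcond} on the one-dimensional $\cD(B)=\operatorname{span}\{\eta\}$ to obtain the scalar relation $(1-|\zeta|^2)b=0$, exactly as in the paper's computation $PK^*BK\eta=|\zeta|^2 b\,\eta$. Your remark that $K^*\eta$ need not be proportional to $\eta$ (so one must use the projection $\langle\eta,K^*\eta\rangle\eta=\overline{\zeta}\eta$) is a correct reading of the same step the paper performs implicitly.
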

\begin{proof}
By Theorem \ref{thm:2.13}, the Friedrichs extension $S_F$ of $S$ is $K$-invariant. Hence, we consider only the case $\cD(B)=\ker(S^*)$ from now on.
Since $\ker(S^*)=\mbox{span}\{\eta\}$ and $K\ker(S^*)=\ker(S^*)$ by Lemma \ref{lemma:A.8}, this means that $\eta$ is an eigenvector of $K$. Let $\zeta\in\mathbb{C}$ denote the corresponding eigenvalue: $K\eta=\zeta\eta$. Moreover, since $\cD(B)$ is one-dimensional, $\eta$ is also an eigenvector with eigenvalue $b$ such that $\Im b\geq 0$. Then Condition \eqref{eq:conjcond} takes the following form:
\begin{equation}
P K^*BK\eta=B\eta,
\end{equation}
where $P=\eta\langle \eta,\cdot\rangle$ is the orthogonal projection onto $\mbox{span}\{\eta\}$, where for convenience, we assume that $\eta$ is normalized. This can be rewritten as
\begin{equation}
PK^*BK\eta=\eta\langle\eta,K^*BK\eta\rangle=\eta\langle K\eta, BK\eta\rangle=|\zeta|^2 b\eta=b\eta=B\eta\,
\end{equation}
or equivalently 
\begin{equation}
(1-|\zeta|^2)b\eta=0.
\end{equation}
Thus, in Case $(i)$, that is, if $|\zeta|\neq 1$, this will only be satisfied if $b=0$, which corresponds to the Krein--von Neumann extension of $S$. On the other hand, in Case $(ii)$ with $|\zeta|=1$, this will be satisfied for any $b$ with $\Im b\geq 0$ corresponding to any maximally dissipative extension of $S$ (cf.\ Proposition \ref{prop:A.10}).
\end{proof}

%%%%%%%%%%%%%%%%%%%%%%%%
%%%%%%%%%%%%%%%%%%%%%%%% 
\section{Application to Sturm--Liouville operators}\label{s3}
%%%%%%%%%%%%%%%%%%%%%%%%
%%%%%%%%%%%%%%%%%%%%%%%%

We now illustrate our previous results via examples involving Sturm--Liouville operators. For general theory, we refer to \cite{GNZ24,Ze05} which contain very detailed lists of references (see also \cite[Sect. 2]{FS25}).
Throughout this section we make the following assumptions, though we note that complex-valued coefficient functions are admissible until considering symmetric differential expressions:

%%%%%%%%%%%%%
\begin{hypothesis} \label{h2.1}
Let $(a,b) \subseteq \bbR$ and suppose that $p,q,r$ are $($Lebesgue\,$)$ measurable functions on $(a,b)$ 
such that $r,p>0$ a.e. on $(a,b)$, $q$ is real-valued a.e. on $(a,b)$, and $r,1/p,q\in\Ll$.
\end{hypothesis}
%%%%%%%%%%%%%
Given Hypothesis \ref{h2.1}, we study Sturm--Liouville operators, $T$, in $\Lr$ associated with the general, 
three-coefficient differential expression
\begin{align}\lb{2.1}
\tau=\f{1}{r(x)}\left[-\f{d}{dx}p(x)\f{d}{dx} + q(x)\right] \, \text{ for a.e.~$x\in(a,b) \subseteq \R$.} 
\end{align}

As usual, the minimal and maximal operators are now defined as follows:

%%%%%%%%%%%%%%
\begin{definition} 
Assume Hypothesis \ref{h2.1}. Given $\tau$ in \eqref{2.1}, the \textit{maximal operator} $T_{max}$ and \textit{preminimal operator} $T_{min,0} $ in $\Lr$ associated with $\tau$ are defined by
\begin{align}
&T_{max} f = \tau f,  
\quad f \in \cD(T_{max})=\big\{g\in\Lr \, \big| \,g,g^{[1]}\in\ACl;  \tau g\in\Lr\big\},\\
&T_{min,0}  f = \tau f,   
\quad f \in \cD (T_{min,0})=\big\{g\in\cD(T_{max}) \, \big| \, \supp \, (g)\subset(a,b) \text{ is compact}\big\},
\end{align}
with the Wronskian $($and quasi-derivative$)$ of $f,g\in\ACl$ defined by
\begin{equation}
W(f,g)(x) = f(x)g^{[1]}(x) - f^{[1]}(x)g(x), \quad
y^{[1]}(x) = p(x) y'(x), \quad x \in (a,b).
\end{equation}
$T_{min,0} $ is symmetric and thus closable, so that one then defines $T_{min}$ 
as the closure of $T_{min,0} $.
\end{definition}
%%%%%%%%%%%%%%

It is well known that
$(T_{min,0})^* = T_{max},$
and hence $T_{max}$ is closed.
Moreover, $T_{min,0} $ is essentially self-adjoint if and only if $T_{max}$ is symmetric, and then 
$\ol{T_{min,0} }=T_{min}=T_{max}$.

In what follows, we will use the notation $[a,b]$ noting that whenever $a$ or $b$ is infinite, the corresponding interval is understood as $(-\infty,b]$ or $[a,\infty)$, respectively (or $(-\infty,\infty)$ if both are infinite) in order to alleviate writing each case separately. 
We now define the operator $K$ we consider.

\begin{lemma}\label{lemmabdinv}
Assume $A,A^{[1]}\in AC([a,b])$ $($understood as $AC_{loc}$ near an infinite endpoint$)$ and $\phi\in C^2([a,b])$ satisfy $\phi'(x)\neq0$ for $x\in[a,b]$, $\phi(d)=d$ for $d\in\{a,b\}$, as well as $\sup_{x\in(a,b)}\big[A^2/\phi'\big](x)$, $\sup_{x\in(a,b)}\big[\phi'/A^2 \big](x)$ exist $($note that this implies that $\phi'(x)>0$ for $x\in[a,b]$ and $A$ is nonzero$)$. In addition, assume $r(x)=C r\big(\phi^{-1}(x)\big)$ for some $C>0$ satisfies Hypothesis \ref{h2.1}.  Then the operator $K:\Lr\to\Lr$ is bounded and boundedly invertible where
\begin{equation}\label{kdef}
(Kf)(x)=A(x) f(\phi(x)).
\end{equation}
Furthermore, the adjoint of $K$ is given by
\begin{equation}
\big(K^* f\big)(x)=\frac{A\big(\phi^{-1}(x)\big) }{C}\big(\phi^{-1})'(x) f\big(\phi^{-1}(x)\big)=\frac{A\big(\phi^{-1}(x)\big)}{C\phi'\big(\phi^{-1}(x)\big)} f\big(\phi^{-1}(x)\big).
\end{equation}
\end{lemma}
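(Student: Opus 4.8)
The plan is to reduce every assertion to a single change of variables $u=\phi(x)$ together with the Schr\"oder-type identity $r(x)=Cr(\phi^{-1}(x))$. First I would record the elementary consequences of the hypotheses. Since $\phi\in C^2([a,b])$ has $\phi'$ nowhere zero on the connected set $[a,b]$, $\phi'$ keeps a constant sign, and because $\phi(a)=a$, $\phi(b)=b$ with $a<b$ this sign is positive; hence $\phi$ is a strictly increasing $C^2$ bijection of $[a,b]$ onto itself whose inverse $\phi^{-1}$ is again a strictly increasing $C^2$ bijection, with $(\phi^{-1})'(x)=1/\phi'(\phi^{-1}(x))$. The finiteness of $M:=\sup_{(a,b)}[A^2/\phi']$ and $M':=\sup_{(a,b)}[\phi'/A^2]$ squeezes $A^2$ between the positive multiples $\phi'/M'$ and $M\phi'$ of $\phi'$, which is precisely the parenthetical assertion that $A$ is nonzero (and bounded away from zero and above relative to $\phi'$); this is exactly what makes both $K$ and its prospective inverse well defined, and rewriting the weight hypothesis as $r(\phi^{-1}(u))=r(u)/C$ is the device that absorbs $r$ each time I substitute.

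For boundedness I would compute $\norm{Kf}^2=\int_a^b|A(x)|^2|f(\phi(x))|^2 r(x)\,dx$ and substitute $u=\phi(x)$, $dx=(\phi^{-1})'(u)\,du$, using $r(\phi^{-1}(u))=r(u)/C$ and $(\phi^{-1})'(u)=1/\phi'(\phi^{-1}(u))$ to arrive at
\begin{equation}
\norm{Kf}^2=\frac1C\int_a^b[A^2/\phi']\big(\phi^{-1}(u)\big)\,|f(u)|^2 r(u)\,du\le\frac{M}{C}\norm{f}^2,
\end{equation}
so $\norm{K}\le\sqrt{M/C}$. For bounded invertibility I would solve $A(x)g(\phi(x))=f(x)$ explicitly, giving the candidate $(K^{-1}f)(x)=f(\phi^{-1}(x))/A(\phi^{-1}(x))$ (legitimate since $A$ is bounded away from zero), verify the two algebraic identities $KK^{-1}=K^{-1}K=I$ by direct composition, and bound it by running the same substitution in reverse: with $v=\phi^{-1}(x)$ one gets $\norm{K^{-1}f}^2=C\int_a^b[\phi'/A^2](v)\,|f(v)|^2 r(v)\,dv\le CM'\norm{f}^2$, whence $\norm{K^{-1}}\le\sqrt{CM'}$.

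Finally, for the adjoint I would pair against an arbitrary $g$. Starting from $\langle f,Kg\rangle=\int_a^b\overline{f(x)}\,A(x)\,g(\phi(x))\,r(x)\,dx$ and performing the substitution $u=\phi(x)$, the integral becomes $\int_a^b\overline{f(\phi^{-1}(u))}\,A(\phi^{-1}(u))\,(\phi^{-1})'(u)\,C^{-1}\,g(u)\,r(u)\,du$; reading off the factor multiplying $g(u)r(u)$ identifies $K^*$ (here $A$ and $(\phi^{-1})'$ being real means no conjugate survives on them), and the two displayed forms of $K^*$ then coincide via $(\phi^{-1})'(x)=1/\phi'(\phi^{-1}(x))$. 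I do not expect a genuine obstacle: the content is bookkeeping for the change of variables, and the only points needing care are justifying that $\phi$ is an honest increasing diffeomorphism of $[a,b]$ (so the substitution and its endpoints are valid) and checking that the two supremum hypotheses are exactly the bounds controlling $K$ and $K^{-1}$ respectively, with the relation $r(x)=Cr(\phi^{-1}(x))$ ensuring the weight $r$ transforms consistently in both directions.
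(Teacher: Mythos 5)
Your proposal is correct and follows essentially the same route as the paper's proof: the change of variables $u=\phi(x)$ combined with the identity $r(x)=Cr(\phi^{-1}(x))$ yields the bound $\|K\|^2\le C^{-1}\sup[A^2/\phi']$, the explicit inverse $(K^{-1}f)(x)=f(\phi^{-1}(x))/A(\phi^{-1}(x))$ with $\|K^{-1}\|^2\le C\sup[\phi'/A^2]$, and the adjoint formula read off from the pairing. Your additional remarks on why $\phi'$ must be positive and why $A$ is bounded relative to $\phi'$ merely make explicit the parenthetical observations already in the statement.
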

\begin{proof}
The fact that $K$ is bounded under the given assumptions is immediate from the fact that, letting $M=C^{-1}\sup_{x\in(a,b)} \big[A^2/\phi'\big](x)$ and abbreviating $\Lr$ by $L_r^2$, 
\begin{align}
\norm{Kf}^2_{L^2_r}&=\int_a^b A^2 (x)|f(\phi(x))|^2\, r(x)dx=\int_a^b \frac{A^2\big(\phi^{-1}(x)\big)}{C}\big(\phi^{-1}\big)'(x) |f(x)|^2\, r(x)dx \leq M\norm{f}^2_{L^2_r}.
\end{align}
Moreover, the inverse of $K$ is given by $\big(K^{-1}f\big)(x)=f\big(\phi^{-1}(x)\big)/A\big(\phi^{-1}(x)\big)$ with norm (which is finite by the assumption $\sup_{x\in(a,b)}\big[\phi'/A^2 \big](x)$ exists)
\begin{align}
\norm{K^{-1}f}^2_{L^2_r}=\int_a^b  \frac{\big|f\big(\phi^{-1}(x)\big)\big|^2}{A^2\big(\phi^{-1}(x)\big)}\, r(x)dx=\int_a^b \frac{C \phi'(x)}{A^2(x) }|f(x)|^2r(x)dx.
\end{align}
The formula for the adjoint now follows from the computation
\begin{align}
\langle g,Kf\rangle_{L_r^2}&=\int_a^b A(x)\overline{g(x)}f(\phi(x))\, r(x)dx=\int_a^b \frac{A\big(\phi^{-1}(x)\big)}{C}\big(\phi^{-1}\big)'(x)\overline{g\big(\phi^{-1}(x)\big)}f(x)\, r(x)dx, 
\end{align}
where we have used that $r(x)=C r\big(\phi^{-1}(x)\big)$ for some $C>0$.
\end{proof}

We now state sufficient requirements on the coefficient functions $p,q,r$ for our main theorem regarding $K$-invariance of Sturm--Liouville operators with $K$ as above.

\begin{hypothesis}\label{hypeqs2}
In addition to Hypothesis \ref{h2.1}, assume that the following hold:
\begin{align}
\begin{split}\label{requirevar}
r(x)&=Cr\big(\phi^{-1}(x)\big),\quad p(x)=\big[A\big(\phi^{-1}(x)\big)\big]^2\phi'\big(\phi^{-1}(x)\big)p\big(\phi^{-1}(x)\big),\\
q(x)&=\frac{A\big(\phi^{-1}(x)\big)}{\phi'\big(\phi^{-1}(x)\big)}\big\{A\big(\phi^{-1}(x)\big) q \big(\phi^{-1}(x)\big)-\big(A^{[1]}\big)'\big(\phi^{-1}(x)\big)\big\},
\end{split}
\end{align}
where $A,A^{[1]}\in AC([a,b])$ $($understood as $AC_{loc}$ near an infinite endpoint$)$ and $\phi\in C^2([a,b])$ satisfy $\phi'(x)\neq0$ for $x\in[a,b]$, $\phi(d)=d$ for $d\in\{a,b\}$, and $\sup_{x\in(a,b)}\big[A^2/\phi'\big](x)$, $\sup_{x\in(a,b)}\big[\phi'/A^2 \big](x)$ exist.
\end{hypothesis}

\begin{theorem}\label{thm3.5}
Assume Hypothesis \ref{hypeqs2} and let $K$ be defined via \eqref{kdef}. Then $T_{min,0}$, $T_{min}$, and $T_{max}$ are $K$-invariant.
\end{theorem}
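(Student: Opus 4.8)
The plan is to establish $K$-invariance for the preminimal operator $T_{min,0}$ directly and then obtain the other two operators for free. Once $K^*T_{min,0}K = T_{min,0}$ is shown, Proposition \ref{prop:2.4} immediately yields that the closure $\overline{T_{min,0}} = T_{min}$ and the adjoint $(T_{min,0})^* = T_{max}$ are $K$-invariant as well, so all three assertions reduce to the single statement about $T_{min,0}$. By the definition of $K$-invariance, two things must be checked: the domain identity $K\cD(T_{min,0}) = \cD(T_{min,0})$ (ensuring $\cD(K^*T_{min,0}K) = \cD(T_{min,0})$), and the pointwise action $K^*\tau(Kf) = \tau f$ for every $f \in \cD(T_{min,0})$.

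The heart of the matter is the action identity, which I would prove in the equivalent intertwining form $\tau(Kf) = (K^*)^{-1}(\tau f)$, where Lemma \ref{lemmabdinv} gives $\big((K^*)^{-1}h\big)(x) = C\phi'(x)A(x)^{-1} h(\phi(x))$. The concrete goal is the pointwise identity
\begin{equation}
(\tau(Kf))(x) = \frac{C\phi'(x)}{A(x)}(\tau f)(\phi(x)).
\end{equation}
First I would compute the quasi-derivative of $Kf$. Writing $A^{[1]} = pA'$ and using the $p$-law in \eqref{requirevar} in the form $p(\phi(x)) = A(x)^2\phi'(x)p(x)$, the factor $p(x)A(x)\phi'(x)$ collapses, yielding the clean expression $(Kf)^{[1]}(x) = A^{[1]}(x)f(\phi(x)) + A(x)^{-1}f^{[1]}(\phi(x))$. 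Differentiating once more, the two first-order terms (those carrying $f'(\phi(x))$) cancel exactly, again by the $p$-law, leaving $((Kf)^{[1]})'(x) = (A^{[1]})'(x)f(\phi(x)) + \phi'(x)A(x)^{-1}(pf')'(\phi(x))$. Substituting into $\tau(Kf)$ and using $r(\phi(x)) = Cr(x)$, the second-order parts match the target automatically, while the remaining zeroth-order terms reduce precisely to the $q$-law $q(\phi(x)) = A(x)\phi'(x)^{-1}[A(x)q(x) - (A^{[1]})'(x)]$. This closes the identity and hence gives $K^*\tau(Kf) = \tau f$.

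For the domain identity I would show that both $K$ and $K^{-1}$ map $\cD(T_{min,0})$ into itself. Since $\phi \in C^2([a,b])$ has $\phi' > 0$ and fixes the endpoints, it is an increasing homeomorphism of $[a,b]$, so composition with $\phi$ (resp.\ $\phi^{-1}$) preserves compactness of support inside $(a,b)$, and multiplication by the nonvanishing factor $A$ (resp.\ $1/(A\circ\phi^{-1})$) does not enlarge the support. The regularity requirements $g, g^{[1]} \in \ACl$ are preserved because $A, A^{[1]} \in \ACl$, $A$ is nonzero, and $\phi \in C^2$, as is visible from the explicit formula for $(Kf)^{[1]}$ above; and $\tau(Kf) \in \Lr$ follows from the intertwining identity together with the boundedness of $(K^*)^{-1}$ guaranteed by Lemma \ref{lemmabdinv}. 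Applying the same reasoning to $K^{-1}$ gives the reverse inclusion, so $K\cD(T_{min,0}) = \cD(T_{min,0})$.

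I expect the main obstacle to be purely computational: carefully carrying the quasi-derivative bookkeeping in the second step so that the first-order terms manifestly cancel and the zeroth-order terms line up with the $q$-law. The transformation laws \eqref{requirevar} are engineered precisely so that each order of derivative in $\tau(Kf)$ matches the corresponding term of $(K^*)^{-1}\tau f$; once the $p$-law is used to linearize the quasi-derivative, everything telescopes, and no genuinely new difficulty arises in passing from $T_{min,0}$ to $T_{min}$ and $T_{max}$ via Proposition \ref{prop:2.4}.
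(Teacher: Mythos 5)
Your proposal is correct and follows essentially the same route as the paper's proof: verify the action identity by direct computation using the transformation laws \eqref{requirevar}, check that compact support, local absolute continuity, and $\tau(Kg)\in\Lr$ are preserved under $K$ and $K^{-1}$ (the last via boundedness of $(K^*)^{-1}$ and $K^*$), and then invoke Proposition \ref{prop:2.4} to pass to $T_{min}$ and $T_{max}$. The only differences are cosmetic: you phrase the key identity in the intertwining form $\tau K=(K^*)^{-1}\tau$ rather than $K^*\tau K=\tau$, and your explicit bookkeeping of $(Kf)^{[1]}(x)=A^{[1]}(x)f(\phi(x))+A(x)^{-1}f^{[1]}(\phi(x))$ checks out against the $p$-, $q$-, and $r$-laws.
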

\begin{proof}
Notice that $K$ is boundedly invertible by Lemma \ref{lemmabdinv}. Moreover, assuming Hypothesis \ref{hypeqs2} holds, a straightforward calculation now yields for $f,f^{[1]}\in AC_{loc}((a,b))$ (using $r(x)=C r\big(\phi^{-1}(x)\big)$),
\begin{align}\label{longeq}
\big(K^*\tau Kf\big)(x)&=\frac{1}{r(x)}\bigg[-\frac{d}{dx}p\big(\phi^{-1}(x)\big)\big[A\big(\phi^{-1}(x)\big)\big]^2\phi'\big(\phi^{-1}(x)\big)\frac{d}{dx}f(x)  \\
&\quad +\frac{A\big(\phi^{-1}(x)\big)}{\phi'\big(\phi^{-1}(x)\big)}\big\{A\big(\phi^{-1}(x)\big) q \big(\phi^{-1}(x)\big) - \big(A^{[1]}\big)'\big(\phi^{-1}(x)\big)\big\} f(x)\bigg],\notag
\end{align}
since comparing the actions of $\tau$ in \eqref{2.1} and $K^*\tau K$ in \eqref{longeq} yields $K^*\tau Kf=\tau f$ by \eqref{requirevar}. 
Regarding the needed domain equality, we first show that if $g\in\mathcal{D}(T_{min,0})$, then so is $Kg$. First of all, notice that since $K$ is bounded, one has $Kg\in L^2_r$ if $g\in L^2_r$, whereas the definition of $K$ (along with the assumptions on $\phi$) implies that if $\supp(g)\subset(a,b)$ is compact, then so is $\supp(Kg)$. Moreover, the assumptions on $\phi$ and $A$ in Hypothesis \ref{hypeqs2} guarantee that $Kg\in AC_{loc}$ if $g$ is and $(Kg)^{[1]}\in AC_{loc}$ if $g^{[1]}$ is by direct calculation. Also, if $\tau g\in L^2_r$ we have that $\tau Kg \in L^2_r$ since, under the assumptions of Hypothesis \ref{hypeqs2},
\begin{align}
\norm{\tau Kg}&=\norm{\big(K^*\big)^{-1} K^{*}\tau Kg}\leq\norm{\big(K^*\big)^{-1} }_{}\norm{\tau g}=\norm{\big(K^{-1}\big)^{*} }_{}\norm{\tau g} =\norm{K^{-1} }_{}\norm{\tau g}<\infty,
\end{align}
where we have used \eqref{longeq}.

On the other hand, since $K$ consists of composition with a $C^2$ function with the endpoints $x=a,b$ as fixed points along with multiplication by nonzero $A$ such that $A,A^{[1]}\in AC([a,b])$, if $Kg\in\mathcal{D}(T_{min,0})$ then $g$ must also satisfy all the properties to be in $\mathcal{D}(T_{min,0})$. This follows from the fact that $K$ cannot improve the regularity of $g$ or make non-compact support compact, and
\begin{align}
\norm{\tau g}&=\norm{K^{*}\tau Kg}\leq\norm{K^* }_{}\norm{\tau K g}.<\infty.
\end{align}
This proves that $T_{min,0}$ is $K$-invariant, while Proposition \ref{prop:2.4} yields that $T_{min}$ and $T_{max}$ are as well.
\end{proof}

A few remarks are now in order regarding \eqref{requirevar}.
Notice that if $A$ is constant, in order to be able to choose $p(x)=1$ in \eqref{requirevar} one must have $\phi'\big(\phi^{-1}(x)\big)=A^{-2}$, that is, $(\phi^{-1}\big)'(x)=A^2$. This implies $\phi^{-1}(x)=A^2 x+B$ for some constant $B$ so that $\phi(x)=A^{-2}(x-B)$. Moreover, since the endpoints $x=a,b$ are the fixed points of $\phi$, we must have $A=\pm1$ and $B=0$ if either endpoint $x=a$ or $x=b$ is finite, that is, only $\phi(x)=x$ is admissible! On the other hand, if $(a,b)=(-\infty,\infty)$, the equation for the potential is now $q(x)=A^{4}q(A^2x+B)$ which has the (Bessel potential) solution $q(x)=c A^{4}(x+B/(A^2-1))^{-2}$, $A\in\bbR\backslash\{-1,0,1\}$, $B,c\in\bbR$, though this potential is not integrable near $x=-B/(A^2-1)$. Finally, the cases $A=\pm1$ are solved by constant $q$.

Thus one must consider variable $A(x)$ when considering operators such that $p(x)$ is constant to avoid reducing to the above trivial cases.
In particular, choosing $C=r(x)=1=p(x)$, that is, considering a Schr\"odinger operator, the requirements on the coefficient functions in Hypothesis \ref{hypeqs2} become
\begin{equation}\label{schrodrequire}
[A(x)]^2\phi'(x)=1, \quad q(x)=\big[A\big(\phi^{-1}(x)\big)\big]^3\big\{A\big(\phi^{-1}(x)\big) q \big(\phi^{-1}(x)\big) -A''\big(\phi^{-1}(x)\big)\big\},
\end{equation}
yielding an interesting and nontrivial $K$-invariance for Schr\"odinger operators (see Example \ref{exampleSchrod}).

\begin{remark}\label{RemSch}
Throughout this remark we assume $A$ in Hypothesis \ref{hypeqs2} is constant, reducing \eqref{requirevar} to
\begin{align}\label{require}
r(x)=Cr\big(\phi^{-1}(x)\big),\, p(x)=A^2 \phi'\big(\phi^{-1}(x)\big) p\big(\phi^{-1}(x)\big),\, 
q(x)=\frac{A^2q\big(\phi^{-1}(x)\big)}{\phi'\big(\phi^{-1}(x)\big)},\ A\in\bbR\backslash\{0\},\, C>0.
\end{align}
$(i)$ The equation satisfied by $r$ in \eqref{require} is Schr\"oder's equation \cite{Schr70}, that is, the equation is the eigenvalue equation for the composition operator sending $f$ to $f\big(\phi^{-1}(\dott)\big)$ with eigenvalue $C^{-1}$ in \eqref{require}.

Moreover, it is interesting to note that $pq$ satisfies the same equation as $r$ with $C=A^4$.\\[1mm]
$(ii)$ If $A=1$ in \eqref{require}, then $p$ and $1/q$ satisfy the same functional equations. Thus the choice $p=1/q$ for $q>0$ is valid in this case. The resulting equation satisfied by $p$ is the so-called Julia's equation \cite{AB15}.

Moreover, letting $\rho=1/p$, the equation for $\rho$ in \eqref{require} can be integrated to yield
$A^2 P(x)= P \big(\phi^{-1}(x)\big),$
where $P'(x)=\pm 1/p(x)$, which is the same Schr\"oder's equation as before with eigenvalue $A^2$ now. Therefore, when $A^{2}=C^{-1}$, the choice $r(x)=\int^x dt/p(t)$ is valid provided the constant of integration is chosen appropriately so that Schr\"oder's equation is satisfied.

Note that if $P(x)$ is finite at an endpoint of the interval, since $\phi^{-1}(d)=d$ for $d\in\{a,b\}$, one must have $P(d)=0$ to be able to choose $A\neq \pm1$ from the functional equation. 
For example, considering $p_\mu(x)=\mu x^2, \mu>0,$ from Example \ref{example}, one must choose $P(x)=\mu^{-1}\big(x^{-1}-1\big)$ so that $P(1)=0$ (with no restriction at $x=0$ since $P$ is infinite there) as $A=A_c=(1+c)^{1/2},\ c>0,$ in this example.

Relating the equations satisfied by the coefficient functions to Schr\"oder's equation is powerful in multiple ways. For instance, for fixed $\phi^{-1}(x)$, if one solves the equation for $P(x)$ and $A$, then $\wti P_n(x)=P^n(x)$ and $\wti A_n=A^n$ define a new pair that solve the equation for the same fixed $\phi^{-1}(x)$. This yields a sequence of new choices for $p(x)$, namely $[p_n(x)]^{-1}=\wti{P}^{\,\prime}_n(x)=nP^{n-1}(x)P'(x)=nP^{n-1}(x)/p(x)$ with $A_n=A^n$, provided $1/p_n\in L^1_{loc}$. 
Returning to Example \ref{example}, this corresponds to $A_{n,c}=(1+c)^{n/2}$ and $p_{n,\mu}(x)=\mu x^2/\big[n \mu^{1-n}\big(x^{-1}-1\big)^{n-1}\big]=n^{-1}\mu^n x^{n+1}\big(1-x\big)^{1-n}$.

Similarly, if $P(x)$ is an invertible solution of Schr\"oder's equation with eigenvalue $s$, one readily verifies that the function $P(x)G(\ln(P(x)))$ is also a solution to the equation for any periodic function $G(x)$ with period $\ln(s)$. In \eqref{require}, this corresponds to the choice $1/\wti{p}(x)=d/dx[P(x)G(\ln(P(x)))]=P'(x)[G(\ln(P(x)))+G'(\ln(P(x)))]$ whenever $G(x)$ is differentiable and $\log(A^2)$-periodic.

One can of course now combine the previous two remarks to yield even more related examples.\\[1mm]
$(iii)$ Similarly, the equation satisfied by $q$ in \eqref{require} can be integrated to yield
$Q(x)=A^2 Q\big(\phi^{-1}(x)\big),$
where $Q'(x)=q(x)$, which is the same Schr\"oder's equation as before with eigenvalue $A^{-2}$ instead. When $A^{2}=C$ this is the same equation satisfied by $r(x)$ yielding that the choice $r(x)=\int^x q(t)dt$ under the same conditions as before.
The other observations in the previous point now hold for $Q(x)$ under the simple change $A\mapsto A^{-1}$.
\end{remark}

In order to study which self-adjoint extensions of a given $K$-invariant symmetric Sturm--Liouville operator remain invariant with respect to $K$, we restrict to the regular setting and recall the following result parameterizing self-adjoint extensions (cf., e.g., \cite[Ch. 4]{GNZ24}, \cite[Sect.~13.2]{We03}, \cite[Ch.~4]{Ze05}):

%%%%%%%%%%%%%
\begin{theorem}\lb{t2.10}
Assume that $\tau$ is regular on $[a,b]$ $($that is, Hypothesis \ref{h2.1} with $L^1_{loc}$ replaced by $L^1$ and finite interval $(a,b)$$)$. Then the following items $(i)$--$(iii)$ hold: \\[1mm] 
$(i)$ All self-adjoint extensions $T_{\al,\be}$ of $T_{min}$ with separated boundary conditions are of the form
\begin{align}
& T_{\al,\be} f = \tau f, \quad f \in \mathcal{D}(T_{\al,\be})=\big\{g\in\mathcal{D}(T_{max}) \, \big| \, g(a)\cos(\al)+g^{[1]}(a)\sin(\al)=0;  \lb{2.25a} \\ 
& \hspace*{5.6cm} g(b)\cos(\be)-g^{[1]}(b)\sin(\be) = 0 \big\},\quad \al,\be\in[0,\pi).    \no  
\end{align}
$(ii)$ All self-adjoint extensions $T_{\eta,R}$ of $T_{min}$ with coupled boundary conditions are of the type
\begin{align}
T_{\eta,R} f = \tau f,\quad f \in \mathcal{D}(T_{\eta,R})=\bigg\{g\in\mathcal{D}(T_{max}) \, \bigg| \begin{pmatrix}g(b)\\g^{[1]}(b)\end{pmatrix} 
= e^{i\eta}R \begin{pmatrix}
g(a)\\g^{[1]}(a)\end{pmatrix} \bigg\}, \ \eta\in[0,\pi),\ R \in SL(2,\bbR).    \lb{2.26a} 
\end{align}
$(iii)$ Every self-adjoint extension of $T_{min}$ is either of type $(i)$ or of type 
$(ii)$.
\end{theorem}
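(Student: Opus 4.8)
The plan is to prove Theorem \ref{t2.10} through the symplectic (Lagrangian-subspace) description of self-adjoint extensions, which becomes especially concrete in the regular case because all four boundary values are available. First I would record that regularity forces every $f\in\cD(T_{max})$ to satisfy $f,f^{[1]}\in AC([a,b])$ (the coefficients now being $L^1$ on a finite interval), so that the boundary values $f(a),f^{[1]}(a),f(b),f^{[1]}(b)$ exist as finite limits. Integrating Lagrange's identity by parts then yields Green's formula
\[
\langle T_{max}f,g\rangle_{L^2_r}-\langle f,T_{max}g\rangle_{L^2_r}=W(\overline f,g)(b)-W(\overline f,g)(a),\qquad f,g\in\cD(T_{max}),
\]
with $W$ the Wronskian defined above. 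Packaging the data in the boundary map $\Gamma\colon\cD(T_{max})\to\bbC^4$, $\Gamma f=(f(a),f^{[1]}(a),f(b),f^{[1]}(b))^{\top}$, one checks that $\Gamma$ is surjective with $\ker\Gamma=\cD(T_{min})$; since $T_{max}=(T_{min,0})^*$, this shows $\dim(\cD(T_{max})/\cD(T_{min}))=4$, i.e.\ $T_{min}$ has deficiency indices $(2,2)$, and Green's formula reads $\langle T_{max}f,g\rangle-\langle f,T_{max}g\rangle=\Omega(\Gamma f,\Gamma g)$ for the fixed nondegenerate skew-Hermitian form $\Omega(u,v)=(\overline{u_3}v_4-\overline{u_4}v_3)-(\overline{u_1}v_2-\overline{u_2}v_1)$ on $\bbC^4$.

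With this structure in place, I would invoke the standard extension-theoretic dictionary (von Neumann theory, or equivalently the boundary-triple formalism): a restriction of $T_{max}$ to a domain $\{f\in\cD(T_{max})\mid \Gamma f\in\mathcal L\}$ for a subspace $\mathcal L\subseteq\bbC^4$ is self-adjoint if and only if $\mathcal L$ is Lagrangian for $\Omega$, that is $\dim\mathcal L=2$ and $\Omega|_{\mathcal L\times\mathcal L}=0$. This reduces the theorem entirely to classifying the Lagrangian subspaces of $(\bbC^4,\Omega)$. Writing $\bbC^4=\bbC^2_a\oplus\bbC^2_b$, the form splits as $\Omega=(-\omega)\oplus\omega$ with $\omega$ the standard symplectic form on $\bbC^2$, and I would dichotomize a Lagrangian $\mathcal L$ according to whether its projection onto $\bbC^2_a$ is surjective.

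In the surjective case $\mathcal L$ is the graph of a linear map $\widehat R\colon\bbC^2_a\to\bbC^2_b$, and isotropy forces $\widehat R^{*}J\widehat R=J$ with $J=\left(\begin{smallmatrix}0&-1\\1&0\end{smallmatrix}\right)$; since for $2\times2$ matrices $R^{\top}JR=(\det R)J$, the Hermitian-symplectic solutions are precisely $\widehat R=e^{i\eta}R$ with $R\in SL(2,\bbR)$, producing the coupled family \eqref{2.26a}. In the non-surjective case one shows $\mathcal L$ decomposes as a direct sum of isotropic lines $\mathcal L_a\subset\bbC^2_a$ and $\mathcal L_b\subset\bbC^2_b$, each real isotropic line being cut out by a single equation $g(a)\cos(\alpha)+g^{[1]}(a)\sin(\alpha)=0$ (resp.\ the $b$-equation with the sign convention of \eqref{2.25a}), giving the separated family \eqref{2.25a}. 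Because these two cases exhaust all Lagrangians, item $(iii)$ follows at once.

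The hard part will be the final classification step: converting the Hermitian-symplectic condition $\widehat R^{*}J\widehat R=J$ into the normal form $e^{i\eta}R$ with $R\in SL(2,\bbR)$, reducing each isotropic line to the single real parameter $\alpha$, and—most delicately—verifying that the parameter ranges $\alpha,\beta,\eta\in[0,\pi)$ enumerate each self-adjoint extension \emph{exactly once} (the phase and the sign of $R$ interact, since $e^{i\pi}R=e^{i0}(-R)$ with $-R\in SL(2,\bbR)$). The abstract self-adjointness $\leftrightarrow$ Lagrangian correspondence and the surjectivity of $\Gamma$ can be cited from the standard references already listed, so the genuinely fiddly bookkeeping is confined to avoiding this double-counting.
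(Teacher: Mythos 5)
The paper does not prove Theorem \ref{t2.10} at all: it is explicitly \emph{recalled} from the standard literature (Zettl, Weidmann, Gesztesy--Nichols--Zinchenko), so there is no in-paper argument to compare against. Your proposal supplies a complete and correct proof along the classical Glazman--Krein--Naimark / Lagrangian-subspace route, which is essentially the argument found in those references: regularity gives finite boundary values and a surjective boundary map $\Gamma$ with $\ker\Gamma=\cD(T_{min})$, Green's formula identifies the boundary form $\Omega$, self-adjoint restrictions correspond to Lagrangian planes in $(\bbC^4,\Omega)$, and the dichotomy on the projection to $\bbC^2_a$ yields the coupled versus separated families. The two classification steps you flag as delicate do work out: the condition $\widehat R^*J\widehat R=J$ forces $|\det\widehat R|=1$, and comparing with $\widehat R^{\top}J\widehat R=(\det\widehat R)J$ gives $\overline{\widehat R}=e^{-2i\eta}\widehat R$, hence $\widehat R=e^{i\eta}R$ with $R$ real of determinant one; since $\det\widehat R=e^{2i\eta}$ determines $\eta$ modulo $\pi$, the restriction $\eta\in[0,\pi)$ already makes the pair $(\eta,R)$ unique and the double-counting you worry about does not occur. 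In the non-surjective case, note explicitly that $\cL\cap\bbC^2_b$ cannot be two-dimensional because $\omega$ is nondegenerate on $\bbC^2_b$; this forces $\dim(P_a\cL)=1$ and then the argument that $\cL$ splits into two isotropic (hence real) lines goes through, each real line being parameterized by a unique $\alpha$ (resp.\ $\beta$) in $[0,\pi)$. With those two observations made explicit, the proof is complete.
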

%%%%%%%%%%%%%%
If either endpoint is in the limit point case (which allows for that endpoint to be infinite), the domain of every self-adjoint extension corresponds to \eqref{2.25a} with the separated boundary conditions at that endpoint removed. If both endpoints are in the limit point case, then no boundary conditions are needed as the maximal operator is self-adjoint.
This leads to the following theorem fully describing $K$-invariant self-adjoint extensions.

\begin{theorem}\label{t3.7}
Assume Hypothesis \ref{hypeqs2} and that $\tau$ is regular at each endpoint needing boundary conditions. Then the following  items $(i)$--$(iv)$ hold $($where when one endpoint is limit point, we only consider the case of separated boundary conditions at the other endpoint$)$: \\[1mm]
$(i)$ The only self-adjoint extension that is always $K$-invariant is the extension satisfying the Dirichlet boundary conditions $ g(a)=0= g(b)$. \\[1mm]
$(ii)$ The separated boundary conditions other than Dirichlet which are invariant under $K$, and hence define $K$-invariant self-adjoint extensions, are given as follows:
\begin{align}
\begin{split}\label{3.33}
&\a=\begin{cases}
%0, & \text{if } A^{[1]}(a)\neq0,\ \phi'(a)=1,\\
\pi/2, & \text{if } A^{[1]}(a)=0,\ \phi'(a)\neq1,\\
\cot^{-1}\bigg(\frac{-A^{[1]}(a)}{\big(1-\phi'(a)\big)A(a)}\bigg), & \text{if } A^{[1]}(a)\neq 0,\ \phi'(a)\neq1,
\end{cases}\\
&\a\in(0,\pi),\quad \text{if } A^{[1]}(a)=0,\ \phi'(a)=1,
\end{split}\\
\begin{split}
&\b=\begin{cases}
%0, & \text{if } A^{[1]}(b)\neq0,\ \phi'(b)=1,\\
\pi/2, & \text{if } A^{[1]}(b)=0,\ \phi'(b)\neq1,\\
\cot^{-1}\bigg(\frac{A^{[1]}(b)}{\big(1-\phi'(b)\big)A(b)}\bigg), & \text{if } A^{[1]}(b)\neq 0,\ \phi'(b)\neq1,
\end{cases}\\
&\b\in(0,\pi),\quad \text{if } A^{[1]}(b)=0,\ \phi'(b)=1.
\end{split}
\end{align}
In particular, if $\phi'(a)=\phi'(b)=1$, $A^{[1]}(a)=A^{[1]}(b)=0$, then all separated extensions are $K$-invariant. 

Moreover, if $\phi'(a)=1$ and $A^{[1]}(a)\neq0$, then only the Dirichlet boundary condition $\a=0$ is K-invariant at $a$. An analogous statement holds for the endpoint $b$.
\\[1mm]
$(iii)$ A necessary condition for any coupled boundary condition to be $K$-invariant is $A(a)=A(b)$. Such $K$-invariant coupled extensions are characterized by the following non-mutually exclusive cases:
\begin{align}
\eta\in[0,\pi),\ A(a)=A(b),\ A^{[1]}(a)=0, \text{ and }\ \Bigg\{
\begin{split}\label{eqkcoupled4}
&R_{11}A^{[1]}(b)+R_{21}A(a)\big(\phi'(b)-1\big)=0,\\
&R_{22}A(a)\big(\phi'(a)-\phi'(b)\big)-R_{12}A^{[1]}(b)=0,
\end{split}
\end{align}
or
\begin{align}\label{eqkcoupled4a}
\eta\in[0,\pi),\ A(a)=A(b),\ R_{12}=0, \text{ and }\ \Bigg\{
\begin{split}
&\phi'(a)=\phi'(b),\\
&R_{11}A^{[1]}(b)+R_{21}A(a)\big(\phi'(b)-1\big)-R_{22}A^{[1]}(a)=0.
\end{split}
\end{align}
In particular, if $\phi'(a)=\phi'(b)=1$, $A(a)=A(b)$, and $A^{[1]}(a)=A^{[1]}(b)=0$, then all coupled extensions are $K$-invariant $($regardless of $\eta\in[0,\pi)$$)$.\\[1mm]
$(iv)$ For $A$ constant,  $K$-invariant coupled extensions are characterized as follows:
\begin{equation}\label{eqkcoupled4b}
\begin{cases}
R\text{ such that } R_{21}=0, & \text{if } \phi'(a)=\phi'(b)\neq 1,\\
R\text{ such that } R_{22}=0, & \text{if } \phi'(b)=1\neq\phi'(a),\\
\text{all } R, & \text{if } \phi'(a)=\phi'(b)=1,
\end{cases}\quad \eta\in[0,\pi).
\end{equation}
\end{theorem}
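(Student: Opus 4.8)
The plan is to convert the whole problem into linear algebra on the boundary data at the two endpoints. By Theorem~\ref{thm3.5} the operator $T_{min}$ is $K$-invariant, and every self-adjoint extension in Theorem~\ref{t2.10} is a restriction of $T_{max}=(T_{min})^{*}$; hence Lemma~\ref{lemma:A3} reduces $K$-invariance of an extension $\hat T$ to the single condition $K\cD(\hat T)=\cD(\hat T)$. In the regular case the boundary map $g\mapsto(g(a),g^{[1]}(a),g(b),g^{[1]}(b))$ is onto $\bbC^{4}$ on $\cD(T_{max})$, and $K$ preserves $\cD(T_{max})$ by Theorem~\ref{thm3.5}, so $K\cD(\hat T)=\cD(\hat T)$ is equivalent to invariance of the subspace carved out by the boundary conditions under the linear action $K$ induces on boundary data.

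First I would compute that induced action. Differentiating $(Kg)(x)=A(x)g(\phi(x))$ and using $\phi(d)=d$ together with $A^{[1]}=pA'$ and $y^{[1]}=py'$ gives, at each endpoint $d\in\{a,b\}$,
\begin{equation*}
\begin{pmatrix}(Kg)(d)\\ (Kg)^{[1]}(d)\end{pmatrix}
= M_d\begin{pmatrix}g(d)\\ g^{[1]}(d)\end{pmatrix},
\qquad
M_d=\begin{pmatrix}A(d) & 0\\ A^{[1]}(d) & A(d)\phi'(d)\end{pmatrix}.
\end{equation*}
This one computation drives all four parts. For $(i)$, the Dirichlet covector $(1,0)$ is a left eigenvector of the lower-triangular $M_d$ (eigenvalue $A(d)$) for every admissible $A,\phi$, so $g(a)=g(b)=0$ is always $K$-invariant; every other separated direction, and (by $(iii)$) every coupled condition, imposes constraints on $A,\phi$ that fail in general, which yields the claimed uniqueness.

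For the separated conditions $(ii)$ the requirement is exactly that the covector $(\cos\a,\sin\a)$ at $a$ (respectively $(\cos\b,-\sin\b)$ at $b$) be a left eigenvector of $M_a$ (respectively $M_b$). Imposing that $(\cos\a,\sin\a)M_a$ be proportional to $(\cos\a,\sin\a)$ yields $\sin^{2}\a\,A^{[1]}(a)=\sin\a\cos\a\,A(a)\big(\phi'(a)-1\big)$; solving for $\cot\a$ produces the stated $\cot^{-1}$ value, and the vanishing of $\sin\a$, of $A^{[1]}(a)$, or of $\phi'(a)-1$ produces exactly the branch structure in \eqref{3.33}. The endpoint $b$ is identical once one tracks the sign coming from $g^{[1]}(b)$ in \eqref{2.25a}.

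The coupled case $(iii)$ is where the real work lies and is, I expect, the main obstacle. The condition $v_b=e^{i\eta}Rv_a$ with $v_d=(g(d),g^{[1]}(d))^{\top}$ is $K$-invariant iff its graph is preserved by $v_a\mapsto M_av_a$, $v_b\mapsto M_bv_b$; since the scalar phase cancels, this is the intertwining relation $M_bR=RM_a$, i.e.\ $M_a$ and $M_b$ are conjugate through $R\in SL(2,\bbR)$. Equality of their eigenvalues (equivalently of trace and determinant) then constrains $A(a),A(b),\phi'(a),\phi'(b)$, and the relevant branch is what forces the necessary condition $A(a)=A(b)$. I would then expand $M_bR=RM_a$ into its four scalar entries and solve the linear system for $R_{11},\dots,R_{22}$; the solution set splits according to whether $A^{[1]}(a)=0$ or $R_{12}=0$ (the latter together with $\phi'(a)=\phi'(b)$), producing the two overlapping families \eqref{eqkcoupled4} and \eqref{eqkcoupled4a}. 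Part $(iv)$ is the specialization $A\equiv\mathrm{const}$, where $A^{[1]}\equiv 0$ and $A(a)=A(b)$ hold automatically, so each $M_d$ is diagonal and the entrywise equations collapse to conditions on a single row (or on all of $R$) according to the values of $\phi'(a),\phi'(b)$ relative to one another and to $1$, giving \eqref{eqkcoupled4b}. The delicate part throughout $(iii)$--$(iv)$ is the bookkeeping needed to show the listed cases are exhaustive and to handle the degeneracies $\phi'(d)=1$ and the non-semisimple configurations of $M_d$ without double-counting.
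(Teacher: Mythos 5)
Your approach is the same as the paper's: reduce $K$-invariance of an extension to $K\cD(\hat T)=\cD(\hat T)$ via Lemma \ref{lemma:A3} and Theorem \ref{thm3.5}, compute the induced action of $K$ on boundary data, and ask when the boundary-condition subspace is preserved. Your matrix $M_d$ is exactly the paper's formula \eqref{kbound}, and your left-eigenvector condition for separated conditions reproduces the paper's equation \eqref{3.32} (including the sign flip at $b$), so parts $(i)$ and $(ii)$ are in order. For the coupled case the paper offers no details ("follow similarly"), and your intertwining relation $M_bR=RM_a$ (the phase $e^{i\eta}$ cancelling, and surjectivity of the trace map onto the graph of $e^{i\eta}R$ in the regular case giving the relation on all of $\bbC^2$) is the correct and complete formalization of what "similarly" must mean; in that sense your write-up of $(iii)$--$(iv)$ is more explicit than the paper's.

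There is, however, one genuine gap in your argument for the necessity claim in $(iii)$. From $M_bR=RM_a$ with $R$ invertible you correctly get that $M_a$ and $M_b$ are similar, hence have equal spectra; but since each $M_d$ is lower triangular with eigenvalues $A(d)$ and $A(d)\phi'(d)$, equality of spectra only gives $\{A(a),A(a)\phi'(a)\}=\{A(b),A(b)\phi'(b)\}$ \emph{as multisets}. Besides the branch $A(a)=A(b)$, there is the crossed branch $A(a)=A(b)\phi'(b)$, $A(a)\phi'(a)=A(b)$ (equivalently $\phi'(a)\phi'(b)=1$ with $A(a)/A(b)=\phi'(b)$), which is not obviously excluded by Hypothesis \ref{hypeqs2} --- note that the maps $\phi_c$ of Example \ref{example} satisfy precisely $\phi_c'(0)\phi_c'(1)=1$. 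Saying "the relevant branch is what forces $A(a)=A(b)$" does not dispose of this case; you must either rule it out or show it contributes no additional $R\in SL(2,\bbR)$. Relatedly, when you do expand $M_bR=RM_a$ entrywise you will obtain the $(1,2)$-equation $R_{12}\big(A(b)-A(a)\phi'(a)\big)=0$ alongside the equations visible in \eqref{eqkcoupled4}; reconciling this constraint with the two stated families is exactly the "delicate bookkeeping" you defer, and it is where the actual content of $(iii)$--$(iv)$ lies, so the proof cannot be considered complete until that case analysis is carried out.
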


\begin{proof}
Note the boundary value of the function and derivative after the action of $K$ become, respectively,
\begin{equation}\label{kbound}
(Kf)(d)=A(d)f(d),\quad 
(Kf)^{[1]}(d)=A^{[1]}(d)f(d)+A(d)\phi'(d)f^{[1]}(d),\quad d\in\{a,b\},
\end{equation}
provided the quasi-derivative of $A(\dott)$ exists at the endpoint considered (which it does by Hypothesis \ref{hypeqs2}).
The result now follows via Lemma \ref{lemma:A3} by considering whether boundary condition equations in the domains \eqref{2.25a} and \eqref{2.26a} still hold after the action of $K$:

We begin by supposing the separated boundary conditions $\sin(\a)g^{[1]}(a)+\cos(\a)g(a)=0$ for some $\a\in[0,\pi)$. We then must study if, for this same $\a$,
\begin{equation}\label{eqsepk}
\sin(\a)(Kg)^{[1]}(a)+\cos(\a)(Kg)(a)=0.
\end{equation}
Notice that $\a=0$ clearly works, and $\a=\pi/2$ reduces to
\begin{equation}
(Kg)^{[1]}(a)=A^{[1]}(a)g(a)+A(a)\phi'(a)g^{[1]}(a)=A^{[1]}(a)g(a),
\end{equation}
requiring $A^{[1]}(a)=0$ for this to be $K$-invariant.

Otherwise, substituting \eqref{kbound} and $\sin(\a)g^{[1]}(a)=-\cos(\a)g(a)$ into \eqref{eqsepk} yields the requirement
\begin{equation}\label{3.32}
\sin(\a)A^{[1]}(a)+\big(1-\phi'(a)\big)\cos(\a)A(a)=0,\quad \a\in(0,\pi)\backslash\{\pi/2\},
\end{equation}
for the boundary condition to be invariant under $K$. 
Notice that if $\phi'(a)=1$ and $A^{[1]}(a)\neq0$, then we must have $\alpha=0$, that is, Dirichlet boundary conditions. Therefore, \eqref{3.32} reduces to two cases. The first is when  $A^{[1]}(a)=0$ and $\phi'(a)=1$ (noting if one of these hold, both must hold for \eqref{3.32} to hold), in which case all boundary conditions are invariant under $K$ since the value of $\a$ is immaterial. Otherwise, if \eqref{3.32} holds with $A^{[1]}(a)\neq0$, $\phi'(a)\neq1$, we can solve for $\alpha$ to arrive at the second line in \eqref{3.33}
An analogous statement holds for the endpoint $b$, noting the change in sign in the characterization of the boundary conditions leads to a change of sign in the argument of cotangent in \eqref{3.33}. These considerations prove all of the statements regarding separated boundary conditions in the theorem.

The statements for coupled boundary conditions follow similarly.
\end{proof}

Theorem \ref{t3.7} leads to the following intriguing implications:

\begin{corollary}\label{c3.8}
Assume Hypothesis \ref{hypeqs2} and that $T_{min}$ is strictly positive. Then the following  hold: \\[1mm]
$(i)$ If $\tau$ is regular, then $A(a)=A(b)$ as well as $A^{[1]}(a)=0$ and/or $\phi'(a)=\phi'(b)$ must hold.
If $A$ is constant, then
$\phi'(a)=\phi'(b)$ and/or $\phi'(b)=1$ must hold.\\[1mm]
$(ii)$ If $\tau$ is regular, $A$ is constant, and $\phi'(a)=\phi'(b)\neq1$ $($resp., $\phi'(b)=1\neq \phi'(a)$$)$, then the Krein--von Neumann extension must coincide with a coupled extension such that $\eta=0$ and $R_{21}=0$ $($resp., $R_{22}=0$$)$.\\[1mm]
$(iii)$ If $\tau$ is limit point at $x=b$, regular at $x=a$, $A^{[1]}(a)=0$, and $\phi'(a)\neq 1$, the Krein--von Neumann extension must be given via Neumann boundary conditions at $x=a$ $($i.e., $g^{[1]}(a)=0$$)$, whereas if $A^{[1]}(a)\neq 0$ and $\phi'(a)\neq1$, the Krein--von Neumann extension must be given by separated boundary conditions at $x=a$ defined via
\begin{equation}
\a=\cot^{-1}\bigg(\frac{-A^{[1]}(a)}{\big(1-\phi'(a)\big)A(a)}\bigg).
\end{equation}

Analogous statements hold with the endpoints interchanged.\\[1mm]
$(iv)$ If $\tau$ is limit point at $x=b$ and regular at $x=a$, then the case $A^{[1]}(a)\neq0$ and $\phi'(a)= 1$ is not possible if $T_{min}$ is strictly positive. However, if $T_{min}$ is only nonnegative, the case $A^{[1]}(a)\neq0$ and $\phi'(a)= 1$ is admissible, in which case the Krein--von Neumann and Friedrichs extensions must coincide.

An analogous statement holds with the endpoints interchanged.\\[1mm]
$(v)$ If $\tau$ is limit point at $x=b$ and regular at $x=a$, then the norm of the eigenvalue of $K\upharpoonright_{\ker(T_{max})}$ in Theorem \ref{t2.17} is equal to one if and only if $\phi'(a)=1$ and $A^{[1]}(a)=0$.

An analogous statement holds with the endpoints interchanged.
\end{corollary}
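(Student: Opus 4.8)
The plan is to deduce every assertion from a single structural observation: by Theorem \ref{thm3.5} the operator $T_{min}$ is $K$-invariant, and being strictly positive by assumption, its Friedrichs and Krein--von Neumann extensions $S_F$ and $S_K$ are $K$-invariant by Theorem \ref{thm:A.7}. Since by Theorem \ref{t2.10} each self-adjoint extension is either separated or coupled, $S_K$ must occur in the list of $K$-invariant extensions produced in Theorem \ref{t3.7}, and the corollary is obtained by intersecting the fact ``$S_K$ is $K$-invariant'' with the explicit constraints of that theorem. Two auxiliary facts will be used throughout. First, $S_F$ is the Dirichlet extension (at each regular endpoint) and $S_K\ne S_F$, because $T_{min}$ is strictly positive with positive deficiency index. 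Second, evaluating the $p$-equation in \eqref{requirevar} at a regular fixed point $\phi(d)=d$ and using the continuity of $A$ and $\phi'$ there yields the endpoint identities $A(a)^2\phi'(a)=1=A(b)^2\phi'(b)$, which will be the linchpin for item $(v)$.

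For the regular case, items $(i)$ and $(ii)$, I would first show that $S_K$ is necessarily a coupled extension. Indeed $\cD(S_K)=\cD(T_{min})\dot{+}\ker(T_{max})$, and any nonzero element of $\ker(T_{max})$ solves $\tau u=0$, so by the uniqueness theorem for this second-order equation its Cauchy data cannot vanish at either single endpoint; hence the two-dimensional boundary-data space of $\ker(T_{max})$ genuinely couples $a$ and $b$, so $S_K$ has the form \eqref{2.26a}. Item $(i)$ is then immediate from the coupled part of Theorem \ref{t3.7}: the necessary condition $A(a)=A(b)$ together with \eqref{eqkcoupled4}--\eqref{eqkcoupled4a} gives $A^{[1]}(a)=0$ and/or $\phi'(a)=\phi'(b)$, while for constant $A$ the three cases of \eqref{eqkcoupled4b} force $\phi'(a)=\phi'(b)$ and/or $\phi'(b)=1$. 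For item $(ii)$ I would add that, since $\tau$ has real coefficients, $\ker(T_{max})$ admits a real basis, so the transfer matrix sending the $a$-data to the $b$-data is real and, by constancy of the Wronskian, lies in $SL(2,\bbR)$; this pins $\eta=0$ in \eqref{2.26a}, and \eqref{eqkcoupled4b} then yields $R_{21}=0$ (resp.\ $R_{22}=0$).

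For the limit point case at $b$ with $a$ regular, items $(iii)$--$(v)$, the deficiency index is $(1,1)$, so $S_K$ is a separated extension carrying a single condition at $a$, it is $K$-invariant, and it is non-Dirichlet since $S_K\ne S_F$. Item $(iii)$ then follows by reading the unique non-Dirichlet $K$-invariant value of $\a$ off \eqref{3.33}. For item $(iv)$, when $A^{[1]}(a)\ne0$ and $\phi'(a)=1$, Theorem \ref{t3.7}$(ii)$ leaves only the Dirichlet condition $K$-invariant at $a$, forcing $S_K$ to be Dirichlet, i.e.\ $S_K=S_F$; this contradicts $S_K\ne S_F$ under strict positivity, whereas for a merely nonnegative $T_{min}$ the inequality $S_K\ne S_F$ need not hold, so there is no contradiction and the same reasoning instead forces $S_K=S_F$. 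For item $(v)$, I would write $\ker(T_{max})$ as one-dimensional, spanned by some $\eta$ with $K\eta=\zeta\eta$ by Lemma \ref{lemma:A.8}; the injective Cauchy-data map at $a$ intertwines $K$ with the lower-triangular matrix appearing in \eqref{kbound}, whose eigenvalues are $A(a)$ and $A(a)\phi'(a)$, so $\zeta\in\{A(a),A(a)\phi'(a)\}$. The endpoint identity $A(a)^2\phi'(a)=1$ makes the two candidates have modulus $\phi'(a)^{\mp1/2}$, so $|\zeta|=1$ exactly when $\phi'(a)=1$, and item $(iv)$ then supplies $A^{[1]}(a)=0$.

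The hard part will not be the abstract $K$-invariance, which Theorem \ref{thm:A.7} hands over directly, but the ODE-geometric bookkeeping that locates $S_K$ within Theorem \ref{t3.7}: proving that $S_K$ is forced to be coupled in the regular case via uniqueness for $\tau u=0$, and correctly tracking the Cauchy data of $\ker(T_{max})$ through the boundary action \eqref{kbound}. I expect the endpoint identity $A(a)^2\phi'(a)=1$ extracted from \eqref{requirevar} to require the most care, since it is precisely what converts the algebraic eigenvalue condition $|\zeta|=1$ into the geometric condition $\phi'(a)=1$ needed in item $(v)$.
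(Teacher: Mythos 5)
Your treatment of items $(i)$--$(iv)$ is sound and runs essentially parallel to the paper's proof; the only substantive difference is that you replace the citation used in the paper (that in the regular, bounded-from-below case the Krein--von Neumann extension is always given by coupled boundary conditions) with two self-contained arguments: the ODE uniqueness argument showing that no nonzero element of $\ker(T_{max})$ can have vanishing Cauchy data at a single endpoint, so $S_K$ must be coupled, and the reality-of-coefficients/Wronskian argument pinning $\eta=0$. Both are correct and make that part of the corollary more self-contained than the paper's one-line reference.

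Item $(v)$, however, contains a genuine gap. Your ``endpoint identity'' $A(a)^2\phi'(a)=1=A(b)^2\phi'(b)$ does not follow from Hypothesis \ref{hypeqs2} together with regularity: the functional equation for $p$ in \eqref{requirevar} holds only almost everywhere, and regularity of $\tau$ at an endpoint means $1/p\in L^1$ there, not that $p$ is continuous with a finite nonzero boundary value, so evaluating the $p$-equation ``at $x=a$'' and cancelling $p(a)$ is not legitimate. The identity is in fact false in general: in the paper's Example \ref{example1} with $\gamma=0$ and $n\ge 2$, the endpoint $x=1$ is regular, yet $\phi_c'(1)=(1+c)^{-1}$ and $A_{n,c}^2=(1+c)^n$, so $A^2\phi'(1)=(1+c)^{n-1}\neq 1$ (there $p_{n,\mu}$ blows up at $x=1$ while $1/p_{n,\mu}$ stays integrable). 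Since this identity is, by your own account, the linchpin converting $|\zeta|=|A(a)|$ into the condition $\phi'(a)=1$, the argument for $(v)$ breaks at exactly that step. The paper's route avoids any boundary evaluation of $p$: by Theorem \ref{t2.17}, $|\zeta|=1$ if and only if all maximally dissipative extensions are $K$-invariant, and by Theorem \ref{t3.7}\,$(ii)$ all separated extensions are $K$-invariant if and only if $\phi'(a)=1$ and $A^{[1]}(a)=0$; combining these two equivalences yields $(v)$ directly. (Your observation that the Cauchy-data map intertwines $K\upharpoonright_{\ker(T_{max})}$ with the triangular matrix from \eqref{kbound}, giving $\zeta=A(a)$ under strict positivity, is itself correct --- it just cannot be converted into the stated criterion without the paper's argument.)
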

\begin{proof}
Items $(i)$ and $(ii)$ are simply implications of the previous theorem, namely $(iii)$ and $(iv)$, and Theorem \ref{thm:A.7} noting that in the quasi-regular and bounded from below case (of which regular is a special case) the Krein--von Neumann extension is always given by coupled boundary conditions \cite[Thm. 3.5]{FGKLNS21}.

Item $(iii)$ and the first part of $(iv)$ now follow by noting that the Krein--von Neumann extension must have zero in its spectrum \cite{FGKLNS21}, whereas the Friedrichs extension is assumed to be strictly positive. The second part of $(iv)$ follows by the previous theorem once again.

Finally, item $(v)$ follows from Theorem \ref{t3.7} $(ii)$ and Theorem \ref{t2.17} $(ii)$.
\end{proof}

\subsection{Examples}\label{sub3.1} We now turn to a few illustrative examples recalling the form of $\tau$ in \eqref{2.1}.

\begin{example}\label{example}
As an example of the previous discussion, consider for $c,\mu\in(0,\infty),\ x\in(0,1)$,
\begin{align}
\begin{split}
&p_\mu(x)=\mu x^2,\quad q(x)=0,\quad r(x)=1,\quad A_c=(1+c)^{1/2},\quad \phi_c(x)=\frac{(1+c)x}{1+cx}.
\end{split}
\label{eq:3.36}
\end{align}
One readily confirms that Hypothesis \ref{hypeqs2} holds with these choices.

Linearly independent solutions to $\tau_{\mu} y=0$ for this example are $u(x)=1$ and $\wti u(x)=x^{-1}$, with the latter not being square integrable at $x=0$. This implies $x=0$ is in the limit point case. Moreover, since $\phi_c'(1)=(1+c)^{-1}\neq1$ for any $c>0$, Theorem \ref{t3.7} $(i)$ and $(ii)$ show the only self-adjoint extensions left invariant under $K$ are those extensions with either Dirichlet or Neumann boundary conditions at $x=1$. By Corollary \ref{c3.8} $(iii)$, these are exactly the Friedrichs and Krein--von Neumann extensions in this case, respectively $($strict positivity follows from the Schr\"odinger form in Example \ref{exampleSchrod} with $\gamma=0$$)$.

Finally, the eigenvalue of $K\upharpoonright_{\ker(T_{max})}$ in Theorem \ref{t2.17} is simply $A_c=(1+c)^{1/2}>1$ since the kernel of $T_{max}$ is spanned by $u(x)=1$. This verifies we are in case $(i)$ of the theorem as expected from above.

\end{example}

We now consider an extension of the previous example utilizing Remark \ref{RemSch}.

\begin{example}\label{example1}
The following satisfies Hypothesis \ref{hypeqs2} with $\phi_c(x)=(1+c)x/(1+cx)$, $c>0$: 
\begin{align}\label{3.60}
&p_{n,\mu}(x)=n^{-1}\mu^n x^{n+1}(1-x)^{1-n},\quad q_{n,\g}(x)=n\g^{n} x^{n-1}(1-x)^{-n-1},\quad r(x)=1,\notag \\
& A_{n,c}=A_c^n=(1+c)^{n/2},\quad n\in\bbN,\ \g\in\bbR,\ c,\mu\in(0,\infty),\ x\in(0,1).
\end{align}
We point out that utilizing Remark \ref{RemSch} allows one to add a litany of weight functions $r$ to this example. For instance, by $(i)$ we can let $r_{\nu}(x)=\nu \big[(1-x)x^{-1}\big]^{\delta}$, $\nu\in(0,\infty),\, \delta\in\bbR$, while $(ii)$ and $(iii)$ yield additional choices. We shall restrict to $r\equiv 1$ and $n\in\bbN$ for simplicity.

The general solution of $\tau_{n,\g,\mu} y=0$ with $p_{n,\mu},q_{n,\g}$ and parameters as in \eqref{3.60} is
\begin{align}
\begin{split}
C_1 \big(x^{-1}-1\big)^{(n/2)\big(1-\sqrt{1+4(\gamma/\mu)^n}\big)}+C_2 \big(x^{-1}-1\big)^{(n/2)\big(1+\sqrt{1+4(\gamma/\mu)^n}\big)},\quad C_1,C_2\in\bbR,
\end{split}
\end{align}
noting that for the case $\g=-4^{1/n}\mu$ with $n$ odd the solutions become linearly dependent so one has to introduce a logarithmic solution. We exclude this case for brevity and assume $\g>-4^{-1/n}\mu$ when $n$ is odd for the argument of the square root to be positive. Notice that only the solution with a negative sign on the square root can possibly be $L^2$ near $x=0$, hence $x=0$ is limit point. Furthermore, this solution will only be $L^2$ near $0$ whenever $\g>-4^{-1/n}n^{-1/n}(2-n^{-1})^{1/n} \mu$ if $n$ is odd, with no restrictions whenever $n$ is even.

We now consider the solution
\begin{equation}
y_{n,\g,\mu}(x)=\big(x^{-1}-1\big)^{(n/2)\big(1-\sqrt{1+4(\g/\mu)^n}\big)}.
\end{equation}
For this solution to also be in $L^2$ near $x=1$ we must have $\g<4^{-1/n}n^{-1/n}(2+n^{-1})^{1/n} \mu.$
Hence, the limit point/limit circle classification at $x=1$ is given as follows:
\begin{align}
x=1\ \text{ is } \begin{cases}
\text{limit circle if } 4^{-1/n}n^{-1/n}\big(2+n^{-1}\big)^{1/n} \mu>\g> -4^{-1/n}\mu,\\
\text{limit point if } \g\geq 4^{-1/n}n^{-1/n}\big(2+n^{-1}\big)^{1/n} \mu,
\end{cases}
\end{align}
where the first lower bound is needed for $n$ odd, and can be replaced with zero $($including equality$)$ when $n$ is even since $\pm\g$ give the same equations/operators in this case.

Furthermore, note that when $\g=0$, the endpoint $x=1$ is regular. Hence Theorem \ref{t3.7} $(ii)$ implies that the only $K$-invariant extension other than Friedrichs is defined by Neumann boundary conditions at $x=1$.
For general $\g$, one can apply \cite[Thm. 3.5]{FGKLNS21} to characterize the Krein--von Neumann extension whenever the $T_{min}$ is strictly positive.

Finally, the eigenvalue of $K\upharpoonright_{\ker(T_{max})}$ in Theorem \ref{t2.17} for this example is $(1+c)^{(n/2) \sqrt{1+4 (\gamma /\mu )^n}}>1$ since the kernel of $T_{max}$ is spanned by $y_{n,\g,\mu}$, thus verifying we are once again in case $(i)$ of the theorem.
\end{example}

We end with an example of an interesting Schr\"odinger operator related to the previous example.

\begin{example}\label{exampleSchrod}
The following satisfies Hypothesis \ref{hypeqs2}:
\begin{align}
\begin{split}
&p(x)=1,\quad q_{\g,\mu}(x)=\dfrac{\g}{\big(1-e^{-\mu^{1/2}x}\big)^{2}}+\dfrac{\mu}{4},\quad r(x)=1,\quad A_{c,\mu}(x)=\big[1+ce^{-\mu^{1/2}x}\big]^{1/2},\\
&\phi_{c,\mu}(x)=-\mu^{-1/2}\ln\bigg[\dfrac{(1+c)e^{-\mu^{1/2}x}}{1+ce^{-\mu^{1/2}x}}\bigg],\quad
\g\in(-\mu/4,\infty),\ c,\mu\in(0,\infty),\ x\in(0,\infty).
\end{split}
\end{align}
In fact, this example can be found via a Liouville--Green transformation performed on Example \ref{example1} with $n=1$ $($see the discussion after this example$)$.
Notice that the potential for this example behaves like a Bessel singularity near $x=0$, but is like the nonzero constant $\g+(\mu/4)$ as $x\to\infty$, unlike the classic Bessel potential which tends to zero at infinity.

Moreover, when $\g=0$ we can apply Theorem \ref{t3.7} $(ii)$ and Corollary \ref{c3.8} $(iii)$ once again to identify the Krein--von Neumann extension since $T_{min}$ is bounded below by $\mu/4$. Note that $A'_{c,\mu}(0)=-c\mu^{1/2}2^{-1}(1+c)^{-1/2}\neq0$ for any $c,\mu$, whereas $\phi'_{c,\mu}(0)=(1+c)^{-1}\neq1$. Therefore, the Krein--von Neumann extension for this example with $\g=0$ is defined via the boundary condition $\a\in(0,\pi/2)$ given by
\begin{equation}
\a=\cot^{-1}\bigg(\frac{-A_{c,\mu}'(0)}{\big(1-\phi_{c,\mu}'(0)\big)A_{c,\mu} (0)}\bigg)=\cot^{-1}\big(2^{-1} \mu^{1/2}\big),\quad \mu\in(0,\infty).
\end{equation}
Once again, one can utilize the notion of generalized boundary values and apply \cite[Thm. 3.5]{FGKLNS21} to characterize the Krein--von Neumann extension whenever the minimal operator is strictly positive.

The eigenvalue of $K\upharpoonright_{\ker(T_{max})}$ in Theorem \ref{t2.17} for this example is $(1+c)^{ \sqrt{1+4 (\gamma /\mu )}/2}\neq1$ since the kernel of $T_{max}$ is spanned by $e^{- \sqrt{\mu } x/2} (e^{\sqrt{\mu } x}-1)^{(1/2)(1-\sqrt{1+4(\g/\mu)})}.$
\end{example}

The last example motivates a closer look at what happens to $K$ and $K^*$ under a Liouville--Green transformation. Under the additional assumptions $(pr),(pr)'/r \in AC_{loc}((a,b))$ and $(pr)\big|_{(a,b)}>0$, the general transformation is of the form (see, e.g., \cite[Thm. 3.5.1]{GNZ24}, \cite[Sect. 4]{FPS25}, and references therein)
\begin{align}
&\xi(x)=\int_k^x  [r(t)/p(t)]^{1/2}\, dt,\quad \cA:=-\int_a^k  [r(t)/p(t)]^{1/2}\, dt,\quad \cB:=\int_k^b  [r(t)/p(t)]^{1/2}\, dt,\quad k\in (a,b),  \notag \\
&u(z,\xi)=[p(x(\xi))r(x(\xi))]^{1/4}y(z,x(\xi)), 
\end{align}
which recasts the equation $\tau y(z,x)=z y(z,x)$ with $x \in (a,b)$ into the form 
\begin{align}
- \frac{d^2}{d\xi^2} u (z,\xi)+V(\xi)u(z,\xi)= zu(z,\xi), \quad \xi \in (\cA,\cB)\subset\R .
\end{align}
The transformed potential $V(\xi)$ can be found to be 
\begin{align}
\begin{split}
V(\xi)&=-\dfrac{1}{16}\dfrac{1}{p(x)r(x)}\left[\dfrac{(p(x)r(x))^{\,\prime}}{r(x)}\right]^2+\dfrac{1}{4}\dfrac{1}{r(x)} \left[\dfrac{(p(x)r(x))^{\,\prime}}{r(x)}\right]^{\,\prime} + \dfrac{q(x)}{r(x)}.
\end{split}
\end{align}
Because of the additional conditions $(pr),(pr)'/r \in AC_{loc}((a,b))$ and $(pr)\big|_{(a,b)}>0$, the potential satisfies $V(\xi)\in L^1_{loc}((\cA,\cB);d\xi)$. 
For example, the $n=1$ case of Example \ref{example1} can be transformed to Example \eqref{exampleSchrod} by choosing
\begin{align}
&\xi(x)=\mu^{-1/2}\int_x^1 t^{-1}\, dt=-\mu^{-1/2}\ln(x), \quad x(\xi)=e^{-\mu^{1/2}\xi},\quad d\xi=-\mu^{-1/2} x^{-1} dx, \notag\\
&u(z,\xi)=\mu^{1/4}(x(\xi))^{1/2}y(z,x(\xi)), 
\end{align}

Next we study the analogs of $K,K^*$ for the transformed operator, which we denote by $\wti K, \wti K^*$.
We will denote the inverse of $\xi(x)$ by $x(\xi)$, the operators associated with this transformed equation by $\wti{T}$, $\boldsymbol{\cdot} =d/d\xi$, and the unitary Liouville--Green transform of a solution from the variable $x$ to $\xi$ defined above by $G$ and its inverse by $G^{-1}=G^*$, that is,
\begin{equation}
(Gf)(\xi)=[p(x(\xi))r(x(\xi))]^{1/4}f(x(\xi)),\quad \big(G^{-1} g\big)(x)=[p(x)r(x)]^{-1/4}g(\xi(x)).
\end{equation}
Assuming \eqref{requirevar}, one then has
\begin{align}
&(\wti{K}f)(\xi)=\big(G K G^{-1} f\big)(\xi)
=C^{-1/4}[A(x(\xi))]^{1/2}\big[\phi'(x(\xi))\big]^{-1/4}f(\xi(\phi(x(\xi)))), \label{Ktilde}\\
&\big(\wti{K}^* f\big)(\xi)=\big(G K^* G^{-1} f\big)(\xi)
= C^{-3/4}
\big[A\big(\phi^{-1}(x(\xi))\big)\big]^{3/2}\big[\phi'\big(\phi^{-1}(x(\xi)\big)\big]^{-3/4} f\big(\xi\big(\phi^{-1}(x(\xi))\big)\big),\\
&\big(\wti{K}^*\wti{T}\wti{K}f\big)(\xi)=\big(G K^* T K G^{-1}f\big)(\xi)=\big(G T G^{-1}f\big)(\xi)=\big(\wti{T} f\big)(\xi).
\end{align}
Notice this naturally transforms the requirements \eqref{require} into \eqref{schrodrequire} since, when $A$ is constant, the new $\wti A$ multiplying $f$ in \eqref{Ktilde} will now depend on the variable in general (unless $\phi$ is linear). Applying these formulas to the $n=1$ case of Example \ref{example1} leads to the form of $K$ in Example \eqref{exampleSchrod}. 

For our last example, we finish by studying a block operator ${\bf T}_{min}$ of the form 
\begin{equation}
{\bf T}_{min}=\begin{pmatrix}T_{min} & 0 \\ 0 & T_{min}\end{pmatrix},
\end{equation}
where $T_{min}$ are the minimal realizations in $L^2(0,1)$ of the Sturm--Liouville differential expression described in Example \ref{example}, with $p(x)=\mu x^2$, $q(x)=0$, and $r(x)=1$, where $\mu>0$ is some fixed positive constant. Choosing ${\bf K}$ as
\begin{equation}
{\bf K}=\begin{pmatrix}0 & K_c \\ K_d & 0\end{pmatrix},
\end{equation}
with $K_c, K_d$ being the similarity transformations corresponding to the choice $A_{c}=(1+c)^{1/2}$, $\phi_c(x)=\frac{(1+c)x}{1+cx}$, and $A_d=(1+d)^{1/2}$, $\phi_d(x)=\frac{(1+d)x}{1+dx}$, respectively, the operator ${\bf T}_{min}$ is ${\bf K}$-invariant.

Note that ${\bf T}_{min}$ is strictly positive and that its defect indices are $(2,2)$, which in addition to its Friedrichs and Krein--von Neumann extensions, will lead to two more ${\bf K}$-invariant nonnegative self-adjoint extensions of ${\bf T}_{min}$, corresponding to the two eigenspaces of ${\bf K}\upharpoonright_{\ker({\bf T}_{max})}$ and choosing $B$ to be the zero operator on these respective spaces (cf.\ Theorem \ref{thm:2.13}).
\begin{example}\label{directsumexample}
Let $\cH=L^2(0,1)\oplus L^2(0,1)=\{(f_1,f_2)\: |\: f_1,f_2\in L^2(0,1)\}$ equipped with the inner product $\langle (f_1,f_2),(g_1,g_2)\rangle_\cH:=\langle f_1,g_1\rangle_{L^2}+\langle f_2,g_2\rangle_{L^2}$. Letting $T_{min}$ be the minimal realization of the Sturm--Liouville differential expression in $L^2(0,1)$ described in Example \ref{example} for some fixed $\mu>0$, we then introduce the strictly positive symmetric operator ${\bf T}_{min}$ given by
\begin{align}
{\bf T}_{min}:\:\cD({\bf T}_{min})=\left\{(f,g)\in \cH\: |\: f,g\in\cD(T_{min})\right\},\:\:(f,g)\mapsto (\tau f, \tau g).
\end{align}
$($Strict positivity follows from the Schr\"odinger form in Example \ref{exampleSchrod} with $\gamma=0$.$)$
The maximal realization ${\bf T}_{max}={\bf T}_{min}^*$ is given by
\begin{align}
{\bf T}_{max}:\:\cD({\bf T}_{max})=\left\{(f,g)\in \cH\: |\: f,g\in\cD(T_{max})\right\},\:\:(f,g)\mapsto (\tau f, \tau g).
\end{align}
Letting $u\in\cD(T_{max})$ be the constant function, $u(x)=1$, which spans $\ker(T_{max})$, the two-dimensional kernel of ${\bf T}_{max}$ is given by
\begin{equation}
\ker({\bf T}_{max})=\left\{(\lambda_1 u, \lambda_2 u)\: | \: \lambda_1,\lambda_2\in\C\right\}.
\end{equation}
Now, for $c,d\in(0,\infty)$, we introduce the bounded and boundedly invertible operators $K_c, K_d$ given by
\begin{equation}
(K_{c}f)(x)=A_{c}f(\phi_{c}(x))\quad\mbox{and}\quad (K_{d}f)(x)=A_{d}f(\phi_{d}(x))
\end{equation}
where $A_{c}=(1+c)^{1/2}, A_d=(1+d)^{1/2}$ and $\phi_c(x)=\frac{(1+c)x}{1+cx}$, $\phi_d(x)=\frac{(1+d)x}{1+dx}$ $($cf.\ Equation \eqref{eq:3.36}$)$. Note that $T_{min}$ is $K_c$ and $K_d$-invariant. Define the operator ${\bf K}$ as
\begin{equation}
{\bf K}: \cH\rightarrow\cH,\quad (f,g)\mapsto (K_cg,K_df),
\end{equation}
with adjoint ${\bf K^*}$ given by 
\begin{equation}
{\bf K^*}: \cH\rightarrow\cH,\quad (f,g)\mapsto (K_d^*g,K_c^*f).
\end{equation}
First note that ${\bf K}\cD({\bf T}_{min})=\cD({\bf T}_{min})$. For every $(f,g)\in\cD({\bf T}_{min})$ we have
\begin{equation}
{\bf K^*}{\bf T}_{min}{\bf K}(f,g)=(K_d^*T_{min}K_df, K_c^*T_{min}K_cg)=(T_{min}f, T_{min}g)={\bf T}_{min}(f,g),
\end{equation}
where we used that $K_{c,d}^*T_{min}K_{c,d}=T_{min}$. Hence, ${\bf T}_{min}$ is ${\bf K}$-invariant. By Lemma \ref{lemma:A.8}, $\ker({\bf T}_{max})={\bf K}\ker({\bf T}_{max})$ and the eigenvectors of ${\bf K}\upharpoonright_{\ker({\bf T}_{max})}$ are given by $v_{\pm}:=(\sqrt{A_c}u, \pm\sqrt{A_d}u)$
with corresponding eigenvalues $\lambda_{\pm}=\pm \sqrt{A_cA_d}$, that is, ${\bf K}v_{\pm}=\lambda_{\pm}v_{\pm}$. Note that $|\lambda_+|=|\lambda_-|>1$.

To describe all nonnegative self-adjoint extensions ${\bf T}_B$ of ${\bf T}_{min}$ that are ${\bf K}$-invariant, there are now three possibilities for choosing the dimension of the domain $\cD(B)$ of the auxiliary operator $B$:\\[1mm]
$(i)$ $\dim(\cD(B))=0$: This corresponds to the Friedrichs extension ${\bf T}_F$ of ${\bf T}_{min}$, which we know by Theorem \ref{thm:A.7} to be ${\bf K}$-invariant. Letting $T_F$ be the Friedrichs extension of $T_{min}$, we have 
\begin{align}
\cD({\bf T}_F)&=\{(f,g)\in\cH\: |\: f,g\in\cD(T_F)\}=\{(f,g)\in\cH\: |\: f,g\in\cD(T_{max}), f(1)=g(1)=0\}.
\end{align}
$(ii)$ $\dim(\cD(B))=2$: In this case, $\cD(B)=\ker({\bf T}_{max})$. Since $|\lambda_{\pm}|>1$, in this case the space $\cC$ defined in \eqref{eq:2.42a} is actually equal to $\cD(B)$. Thus, by Theorem \ref{thm:2.17}, we have $\cD(B)\subseteq\ker(B)\subseteq\cD(B)$, and therefore $B\equiv 0$ on $\ker({\bf T}_{max})$ is the only ${\bf K}$-invariant self-adjoint extension with the property that $\cD(B)=\ker({\bf T}_{max})$. This corresponds to the Krein--von Neumann extension ${\bf T}_K$ of ${\bf T}_{min}$. Letting $T_K$ be the Krein--von Neumann extension of $T_{min}$, we have 
\begin{align}
\cD({\bf T}_K)&=\{(f,g)\in\cH\: |\: f,g\in\cD(T_K)\}=\{(f,g)\in\cH\: |\: f,g\in\cD(T_{max}), f'(1)=g'(1)=0\}.
\end{align}
$(iii)$ $\dim(\cD(B))=1$: In this case we have to choose a one-dimensional subspace of $\ker({\bf T}_{max})$. By Theorem \ref{thm:A.10} it has to satisfy ${\bf K}\cD(B)=\cD(B)$ for ${\bf T}_B$ to be ${\bf K}$-invariant. The only two possible choices for this are the eigenspaces of ${\bf K}\upharpoonright_{\cD(B)}$ given by $\cD(B)=\mbox{span}\{v_+\}$ and $\cD(B)=\mbox{span}\{v_-\}$. If $\cD(B)=\mbox{span}\{v_\pm\}$, the operator ${\bf K}\upharpoonright_{\cD(B)}$ acts just as the multiplication by the scalar $\lambda_\pm$. Since $|\lambda_\pm|>1$, the space $\cC$ defined in \eqref{eq:2.42a} again equals $\mbox{span}\{v_\pm\}$ and thus, by Theorem \ref{thm:2.17}, the only choice for $B$ to describe a ${\bf K}$-invariant nonnegative self-adjoint extension of ${\bf T}_{min}$ is $B\equiv 0$, corresponding to the ${\bf K}$-invariant extensions described in Theorem \ref{thm:2.13}. 
With the choice $\cD(B)=\mbox{span}\{v_\pm\}$ and defining 
\begin{equation}
v_{\pm}^\perp;=(\sqrt{A_d}u,\mp\sqrt{A_c}u),
\end{equation}
we have $\cD(B)^\perp\cap\ker({\bf T}_{max})=\mbox{span}\{v_\pm^\perp\}$. Hence, the two additional ${\bf K}$-invariant nonnegative self-adjoint extensions have the following domains:
\begin{align}
\cD({\bf T}_\pm)=\cD({\bf T}_{min})\dot{+}\mbox{span}\{v_{\pm}\}\dot{+}{\bf T}_F^{-1}\mbox{span}\{v_\pm^\perp\}.
\end{align}
Direct calculation verifies that ${\bf T}_F^{-1}v_{\pm}^\perp$ is given by
\begin{equation}
{\bf T}_F^{-1}v_{\pm}^\perp=(\sqrt{A_d}\log(\dott),\mp\sqrt{A_c}\log(\dott)).
\end{equation}
In terms of boundary conditions, this leads to the following alternative description of $\cD({\bf T}_\pm)$:
\begin{equation}
\cD({\bf T}_\pm)=\{(f,g)\in\cD({\bf T}_{max}) \: | \: \sqrt{A_d}f(1)=\pm \sqrt{A_c}g(1),\:\: \sqrt{A_c}f'(1)=\mp\sqrt{A_d}g'(1)\}.
\end{equation}
Note that these four extensions, ${\bf T}_F,{\bf T}_K,{\bf T}_+$, and ${\bf T}_-$, describe \emph{all} ${\bf K}$-invariant maximally dissipative and nonnegative self-adjoint extensions of ${\bf T}_{min}$.
\end{example}

%\medskip

%%%%%%%%%%%%%%%%%%%%%%%%%%%%%%%%%%%%%%%
{\bf Acknowledgments.} CF is grateful for hospitality at The Ohio State University and financial support through its Mathematics Research institute. He was also supported in part by the National Science Foundation under grant DMS--2510063. BR thanks Baylor University for their hospitality as well. JS was supported in part by an AMS--Simons Travel Grant.

%%%%%%%%%%%%%%%%%%%%%%%%%%%%%%%%%%%%%%%

%%%%%%%%%%%%%%%%%%%%%%%%%%%%%%%%%%%%%%%

\end{document}